\documentclass[10pt,reqno]{amsart}

\usepackage{amsthm, mathrsfs, amsmath, amstext, amsxtra, amsfonts, dsfont, amssymb, bm}
\usepackage{xcolor}
\usepackage{lmodern}
\usepackage[colorlinks, linkcolor=red, citecolor=blue, urlcolor=blue, pagebackref, hypertexnames=false]{hyperref}




\newcommand{\R}{\mathbb R}
\newcommand{\C}{\mathbb C}
\newcommand{\Ac}{\mathcal A}

\newcommand{\Gc}{\mathcal G}

\newcommand{\Mcal}{\mathcal M}
\newcommand{\eps}{\epsilon}

\newcommand{\n}{\bm n}
\newcommand{\scal}[1]{\left\langle #1 \right\rangle} 

\newcommand{\defendproof}{\hfill $\Box$} 


\newtheorem{theorem}{Theorem}[section]
\newtheorem{lemma}[theorem]{Lemma} 
\newtheorem{proposition}[theorem]{Proposition}
 
\theoremstyle{definition}
\newtheorem{definition}[theorem]{Definition}
\newtheorem{remark}[theorem]{Remark}

\title[Instability radial standing waves]{On instability of radial standing waves for the nonlinear Schr\"odinger equation with inverse-square potential} 

\author{Van Duong Dinh}
\address{
Institut de Mathematiques de Toulouse UMR5219, Universit\'e de Toulouse CNRS, 31062 Toulouse Cedex 9, France
and 
Department of Mathematics, HCMC University of Pedagogy, 280 An Duong Vuong, Ho Chi Minh, Vietnam
}
\email{contact@duongdinh.com}

\keywords{Nonlinear Schr\"odinger equation; Inverse-square potential; Radial ground states; Instability}
\subjclass[2010]{35B35, 35Q55}

\begin{document}
	
	\begin{abstract}
		We show the strong instability of radial ground state standing waves for the focusing $L^2$-supercritical nonlinear Schr\"odinger equation with inverse-square potential
		\[
		i\partial_t u + \Delta u + c|x|^{-2} u = - |u|^{\alpha} u, \quad (t,x)\in \R \times \R^d,
		\]
		where $d\geq 3$, $u: \R \times \R^d \rightarrow \C$, $c\ne 0$ satisfies $c<\lambda(d):=\left(\frac{d-2}{2}\right)^2$ and $\frac{4}{d} <\alpha<\frac{4}{d-2}$. This result extends a recent result of Bensouilah-Dinh-Zhu [{\it On stability and instability of standing waves for the nonlinear Schr\"odinger equation with inverse-square potential}, \url{arXiv:1805.01245}] where the stability and instability of standing waves were shown in the $L^2$-subcritical and $L^2$-critical cases.
	\end{abstract}
	
	\maketitle
	
	\section{Introduction}
	\setcounter{equation}{0}
	In the last decade, there has been a great deal of interest in studying the nonlinear Schr\"odinger equation with inverse-square potential, namely
	\begin{align}
	i\partial_t u + \Delta u + c|x|^{-2} u = \mu |u|^\alpha u, \quad (t,x) \in \R \times \R^d, \label{NLS inverse square introduction}
	\end{align}
	where $d\geq 3$, $u: \R \times \R^d \rightarrow \C$, $c \ne 0$ satisfies $c<\lambda(d):=\left(\frac{d-2}{2}\right)^2$, $\mu \in \R$ and $\alpha>0$. The nonlinear Schr\"odinger equation $(\ref{NLS inverse square introduction})$ appears in a variety of physical settings, such as quantum field equations or black hole solutions of the Einstein's equations (see e.g. \cite{Case, CamEpeFanCan, KalSchWalWus}) and quantum gas theory (see e.g. \cite{AstrakharchikMalomed, SakaguchiMalomed-11, SakaguchiMalomed-13}). The mathematical interest in the nonlinear Schr\"odinger equation with inverse-square potential comes from the fact that the potential is homogeneous of degree $-2$ and thus scales exactly the same as the Laplacian. Recently, the equation $(\ref{NLS inverse square introduction})$ has been intensively studied (see e.g. \cite{Bensouilah, BensouilahDinh, BensouilahDinhZhu, BurPlaStaZad, CsoboGenoud, Dinh-inverse, KilMiaVisZhaZhe-energy, KillipMurphyVisanZheng, OkazawaSuzukiYokota, TracZogra, ZhangZheng} and references therein). 
	
	In this paper, we consider the $L^2$-supercritical nonlinear Schr\"odinger equation with inverse-square potential, namely
	\begin{align}
	\left\{
	\begin{array}{rcl}
	i\partial_t u + \Delta u + c|x|^{-2} u &=& - |u|^{\alpha} u, \quad (t,x)\in \R \times \R^d, \\
	u(0)&=& u_0 \in H^1,
	\end{array} 
	\right. \label{inverse square NLS}
	\end{align}
	where $d\geq 3$, $u: \R \times \R^d \rightarrow \C$, $u_0:\R^d \rightarrow \C$, $c\ne 0$ satisfies $c<\lambda(d)$ and $\frac{4}{d} <\alpha<\frac{4}{d-2}$. 
	
	The main purpose of this paper is to study the instability of radial ground state standing waves for \eqref{inverse square NLS}. Before stating our result, let us recall known results related to the stability and instability of standing waves for the nonlinear Schr\"odinger-like equations. The stability of standing waves for the classical nonlinear Schr\"odinger equation (i.e. $c=0$ in $(\ref{inverse square NLS})$) is widely pursued by physicists and mathematicians (see e.g. \cite{F} for reviews). To our knowledge, the first work addressed the orbital stability of standing waves for the classical NLS belongs to Cazenave-Lions \cite{CazenaveLions} via the concentration-compactness principle. Later, Weinstein in \cite{Weinstein85, Weinstein86} gave another approach to prove the orbital stability of standing waves for the classical NLS. Afterwards, Grillakis-Shatah-Strauss in \cite{GrillakisShatahStrauss87, GrillakisShatahStrauss90} gave a criterion based on a form of coercivity for the action functional (see $(\ref{action functional})$) to prove the stability of standing waves for a Hamiltonian system which is invariant under a one-parameter group of operators. Since then, a lot of results on the orbital stability of standing waves for nonlinear dispersive equations were obtained. For the nonlinear Schr\"odinger equation with a harmonic potential, Zhang \cite{Zhang} succeeded in obtaining the orbital stability of standing waves by the weighted compactness lemma. Recently, the orbital stability phenomenon was proved for the fractional nonlinear Schr\"{o}dinger equation by establishing the profile decomposition for bounded sequences in $H^s$ (see e.g. \cite{PengShi, ZhangZhu}). The instability of standing waves for the classical NLS was first studied by Berestycki-Cazenave \cite{BerestyckiCazenave} (see also \cite{Cazenave}). Later, Le Coz in \cite{LeCoz08} gave an alternative, simple proof of the classical result of Berestycki-Cazenave. The key point is to establish the finite time blow-up by using the variational characterization of the ground states as minimizers of the action functional and the virial identity. For the Schr\"odinger equations with more general nonlinearities, this method does not work due the the lack of virial identities. In such cases, one may use a powerful tool of Grillakis-Shatah-Strauss \cite{GrillakisShatahStrauss87, GrillakisShatahStrauss90} to derive the instability of standing waves.  
	
	Recently, the authors in \cite{BensouilahDinhZhu} succeeded, using a profile decomposition theorem  proved by the first author \cite{Bensouilah}, to establish the stability of standing waves for \eqref{inverse square NLS} in the $L^2$-subcritical regime and the instability by blow-up in the $L^2$-critical regime. The main goal here is to extend these results to the $L^2$-supercritical case but only for radial ground state standing waves.
	
	Throughout this paper, we call a standing wave a solution of $(\ref{inverse square NLS})$ of the form $e^{i\omega t} \phi_\omega$, where $\omega \in \R$ is a frequency and $\phi_\omega \in H^1$ is a nontrivial solution to the elliptic equation
	\begin{align}
	-\Delta \phi_\omega + \omega \phi_\omega - c|x|^{-2} \phi_\omega - |\phi_\omega|^\alpha \phi_\omega =0. \label{elliptic equation}
	\end{align}
	
	Note that the existence of positive radial solutions to the elliptic equation
	\[
	-\Delta \phi + \phi - c|x|^{-2} \phi - |\phi|^\alpha \phi =0
	\]
	was shown in \cite[Theorem 3.1]{KillipMurphyVisanZheng} and \cite[Theorem 4.1]{Dinh-inverse}. By setting $\phi_\omega(x):= \left(\sqrt{\omega} \right)^{\frac{2}{\alpha}} \phi(\sqrt{\omega}x)$, it is easy to see that $\phi_\omega$ is a solution of $(\ref{elliptic equation})$. This shows the existence of positive radial solutions to $(\ref{elliptic equation})$.
	
	Note also that $(\ref{elliptic equation})$ can be written as $S'_\omega(\phi_\omega)=0$, where
	\begin{align}
	\begin{aligned}
	S_\omega(v) &:= E(v) + \frac{\omega}{2} \|v\|^2_{L^2} \\
	&\mathrel{\phantom{:}}= \frac{1}{2} \|v\|^2_{\dot{H}^1_c} + \frac{\omega}{2} \|v\|^2_{L^2} -\frac{1}{\alpha+2} \|v\|^{\alpha+2}_{L^{\alpha+2}}
	\end{aligned}
	\label{action functional}
	\end{align}
	is the action functional. Here 
	\begin{align}
	\|v\|^2_{\dot{H}^1_c} := \|\nabla v\|^2_{L^2} - c\||x|^{-1} v\|^2_{L^2} \label{hardy functional}
	\end{align}
	is the Hardy functional. 
	
	We denote the set of non-trivial radial solutions of $(\ref{elliptic equation})$ by
	\[
	\Ac_{\text{rad},\omega}:= \left\{ v \in H^1_{\text{rad}} \backslash \{0\} \ : \ S'_\omega(v) =0 \right\},
	\]
	where $H^1_{\text{rad}}$ is the space of radial $H^1$ functions.
	
	\begin{definition} [Radial ground states] \label{definition radial ground state}
		A function $\phi \in \Ac_{\text{rad},\omega}$ is called {\bf a radial ground state} for $(\ref{elliptic equation})$ if it is a minimizer of $S_\omega$ over the set $\Ac_{\text{rad},\omega}$. The set of radial ground states is denoted by $\Gc_{\text{rad},\omega}$. In particular,
		\[
		\Gc_{\text{rad},\omega} = \left\{ \phi \in \Ac_{\text{rad},\omega} \ : \ S_\omega(\phi) \leq S_\omega(v), \ \forall v \in \Ac_{\text{rad},\omega} \right\}.
		\]
	\end{definition}
	We have the following result on the existence of radial ground states for $(\ref{elliptic equation})$.
	\begin{proposition} \label{proposition existence radial ground states}
		Let $d\geq 3$, $c\ne 0$ be such that $c<\lambda(d)$, $\frac{4}{d}<\alpha<\frac{4}{d-2}$ and $\omega>0$. Then the set $\mathcal{G}_{\emph{rad},\omega}$ is not empty, and it is characterized by
		\[
		\mathcal{G}_{\emph{rad},\omega} = \left\{ v \in H^1_{\emph{rad}} \backslash \{0\}, \ : \ S_\omega(v) = d(\emph{rad},\omega), \ K_\omega(v)=0 \right\},
		\]
		where
		\[
		K_\omega(v):= \left. \partial_\lambda S_\omega(\lambda v) \right|_{\lambda=1} = \|v\|^2_{\dot{H}^1_c} + \omega \|v\|^2_{L^2} - \|v\|^{\alpha+2}_{L^{\alpha+2}}
		\]
		is the Nehari functional and
		\begin{align}
		d(\emph{rad},\omega):= \inf \left\{ S_\omega(v) \ : \ v \in H^1_{\text{rad}} \backslash \{0\}, \ K_\omega (v) =0 \right\}. \label{minimizing problem}
		\end{align}
	\end{proposition}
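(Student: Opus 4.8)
The plan is to identify $\Gc_{\mathrm{rad},\omega}$ with the set of minimizers of the Nehari problem $(\ref{minimizing problem})$ and to solve that constrained variational problem directly, exploiting the compactness of the radial Sobolev embedding. I begin with two reductions valid on the Nehari manifold $\{v\in H^1_{\mathrm{rad}}\backslash\{0\}:K_\omega(v)=0\}$. Since $c<\lambda(d)$, Hardy's inequality shows that $\|\cdot\|^2_{\dot H^1_c}$ is a positive quadratic form equivalent to $\|\nabla\cdot\|^2_{L^2}$, so that $\|v\|^2_{\dot H^1_c}+\omega\|v\|^2_{L^2}$ is comparable to the square of the $H^1$-norm. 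Whenever $K_\omega(v)=0$ one has $\|v\|^{\alpha+2}_{L^{\alpha+2}}=\|v\|^2_{\dot H^1_c}+\omega\|v\|^2_{L^2}$, whence the action collapses to $S_\omega(v)=\frac{\alpha}{2(\alpha+2)}\|v\|^{\alpha+2}_{L^{\alpha+2}}$. Inserting the identity $K_\omega(v)=0$ into the Sobolev inequality (legitimate since $2<\alpha+2<2^\ast=\frac{2d}{d-2}$) produces a uniform lower bound $\|v\|_{H^1}\geq\delta>0$ on the manifold, and hence $d(\mathrm{rad},\omega)>0$.

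Next I would show the infimum is attained. For a minimizing sequence $(v_n)$, the relation $S_\omega(v_n)=\frac{\alpha}{2(\alpha+2)}(\|v_n\|^2_{\dot H^1_c}+\omega\|v_n\|^2_{L^2})$ bounds $(v_n)$ in $H^1$, so after passing to a subsequence $v_n\rightharpoonup v$ in $H^1_{\mathrm{rad}}$ and, by the compact radial embedding $H^1_{\mathrm{rad}}\hookrightarrow L^{\alpha+2}$, $v_n\to v$ strongly in $L^{\alpha+2}$. The $L^{\alpha+2}$ lower bound forces $v\neq0$. As $\|\cdot\|^2_{\dot H^1_c}+\omega\|\cdot\|^2_{L^2}$ is a positive quadratic form equivalent to the $H^1$-norm squared, it is weakly lower semicontinuous, giving $K_\omega(v)\leq\liminf K_\omega(v_n)=0$. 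If $K_\omega(v)<0$, the rescaling $\lambda\mapsto K_\omega(\lambda v)=\lambda^2(\|v\|^2_{\dot H^1_c}+\omega\|v\|^2_{L^2})-\lambda^{\alpha+2}\|v\|^{\alpha+2}_{L^{\alpha+2}}$ has a unique zero $\lambda_0\in(0,1)$, and then $S_\omega(\lambda_0 v)=\frac{\alpha}{2(\alpha+2)}\lambda_0^{\alpha+2}\|v\|^{\alpha+2}_{L^{\alpha+2}}<\frac{\alpha}{2(\alpha+2)}\|v\|^{\alpha+2}_{L^{\alpha+2}}=d(\mathrm{rad},\omega)$, contradicting the definition of the infimum. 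Hence $K_\omega(v)=0$ and $S_\omega(v)=d(\mathrm{rad},\omega)$, so $v$ is a minimizer.

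To promote any such minimizer to a genuine solution of $(\ref{elliptic equation})$, I would apply the Lagrange multiplier theorem. Since $\langle K'_\omega(v),v\rangle=2(\|v\|^2_{\dot H^1_c}+\omega\|v\|^2_{L^2})-(\alpha+2)\|v\|^{\alpha+2}_{L^{\alpha+2}}=-\alpha\|v\|^{\alpha+2}_{L^{\alpha+2}}\neq0$ on the manifold, $K'_\omega(v)\neq0$, so there is $\eta$ with $S'_\omega(v)=\eta K'_\omega(v)$; pairing with $v$ and using $\langle S'_\omega(v),v\rangle=K_\omega(v)=0$ forces $\eta=0$, hence $S'_\omega(v)=0$ and $v\in\Ac_{\mathrm{rad},\omega}$. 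This already gives $\Gc_{\mathrm{rad},\omega}\neq\emptyset$, and moreover $\min_{\Ac_{\mathrm{rad},\omega}}S_\omega=d(\mathrm{rad},\omega)$, because conversely every $w\in\Ac_{\mathrm{rad},\omega}$ satisfies $K_\omega(w)=\langle S'_\omega(w),w\rangle=0$ and is thus admissible in $(\ref{minimizing problem})$. The stated characterization then follows by double inclusion: any $\phi\in\Gc_{\mathrm{rad},\omega}$ lies in $\Ac_{\mathrm{rad},\omega}$ and attains $d(\mathrm{rad},\omega)$, hence satisfies $K_\omega(\phi)=0$ and $S_\omega(\phi)=d(\mathrm{rad},\omega)$; conversely, any $v$ with $K_\omega(v)=0$ and $S_\omega(v)=d(\mathrm{rad},\omega)$ is a Nehari minimizer, so the multiplier argument places it in $\Ac_{\mathrm{rad},\omega}$ where it minimizes $S_\omega$, i.e. $v\in\Gc_{\mathrm{rad},\omega}$.

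I expect the attainment step to be the main obstacle. The decisive ingredient is the compactness of the radial embedding into $L^{\alpha+2}$, which sidesteps the usual loss of compactness coming from translation invariance on $\R^d$ and thereby avoids a full concentration-compactness argument; one must also verify carefully that the Hardy form is genuinely positive and weakly lower semicontinuous, which is precisely where the hypothesis $c<\lambda(d)$ is used.
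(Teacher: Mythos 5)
Your proposal is correct and follows essentially the same route as the paper: positivity of $d(\mathrm{rad},\omega)$ via Sobolev embedding on the Nehari manifold, attainment via a minimizing sequence and the compact embedding $H^1_{\mathrm{rad}}\hookrightarrow L^{\alpha+2}$ with the scaling argument ruling out $K_\omega(v)<0$, the Lagrange multiplier computation $\scal{K'_\omega(v),v}=-\alpha\|v\|^{\alpha+2}_{L^{\alpha+2}}\ne 0$ forcing $S'_\omega(v)=0$, and the characterization by double inclusion. The only cosmetic difference is that you invoke weak lower semicontinuity of the equivalent Hilbert form $H_\omega$ directly, where the paper splits $H_\omega(v_n)=H_\omega(v_0)+H_\omega(r_n)+o_n(1)$ along the weakly convergent sequence; these are the same mechanism.
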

	We refer the reader to Section $\ref{section existence ground state}$ for the proof of the above result.
	\begin{remark}
		Recently, Fukaya-Ohta in \cite{FukayaOhta} studied the instability of standing waves for the nonlinear Schr\"odinger equation with an attractive inverse power potential, namely
		\[
		i\partial_t u + \Delta u + \gamma |x|^{-\alpha} u = -|u|^{p-1} u,
		\]
		where $\gamma>0, 0<\alpha<\min\{2, d\}$ and $\frac{4}{d}<p-1<\frac{4}{d-2}$ if $d\geq 3$ and $\frac{4}{d}<p-1<\infty$ if $d=1$ or $d=2$. The potential $V(x) = \gamma |x|^{-\alpha}$ belongs to $L^r(\R^d) + L^\infty(\R^d)$ for some $r> \min \{1,d/2\}$. This special property allows them to use the weak continuity of the potential energy (see e.g. \cite[Theorem 11.4]{LiebLoss}) to prove the existence of non-radial ground states. In our case, the inverse-square potential $V(x)=c |x|^{-2}$ does not belong to $L^{\frac{d}{2}}(\R^d) + L^\infty(\R^d)$, so the weak continuity of potential energy is not applicable to our potential. At the moment, we do not know how to show the existence of non-radial ground states for $(\ref{elliptic equation})$. We hope to consider this problem in a future work.
	\end{remark}
	
	Let us now recall the definition of the strong instability.
	\begin{definition}[Strong instability] 
		We say that the standing wave $e^{i\omega t} \phi_\omega$ is strongly unstable if for any $\eps>0$, there exists $u_0 \in H^1$ such that $\|u_0 - \phi_\omega\|_{H^1} <\eps$ and the solution $u(t)$ of $(\ref{inverse square NLS})$ with initial data $u_0$ blows up in finite time.
	\end{definition}
	
	Our main result of this paper is the following:
	\begin{theorem} \label{theorem instability}
		Let $d\geq 3$, $c\ne 0$ be such that $c<\lambda(d)$, $\frac{4}{d}<
		\alpha<\frac{4}{d-2}$, $\omega>0$ and $\phi_\omega \in \Gc_{\emph{rad},\omega}$. Then the standing wave solution $e^{i\omega t} \phi_\omega$ of $(\ref{inverse square NLS})$ is strongly unstable. 
	\end{theorem}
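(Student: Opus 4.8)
The plan is to prove strong instability by constructing, arbitrarily close to $\phi_\omega$ in $H^1$, initial data whose solution blows up in finite time, following the Berestycki--Cazenave variational mechanism adapted to the inverse-square potential. The blow-up data will come from $\phi_\omega$ via the $L^2$-preserving dilation $\phi_\omega^\lambda(x):=\lambda^{d/2}\phi_\omega(\lambda x)$, and the blow-up will be detected through the virial identity, which for this equation reads
\[
\frac{d^2}{dt^2}\,\||x|u(t)\|^2_{L^2} = 8\,P(u(t)), \qquad P(v):=\|v\|^2_{\dot{H}^1_c}-\frac{d\alpha}{2(\alpha+2)}\|v\|^{\alpha+2}_{L^{\alpha+2}},
\]
where $P(v)=\partial_\lambda S_\omega(v^\lambda)\big|_{\lambda=1}$ is the associated scaling functional. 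Since the potential is homogeneous of degree $-2$, the scaling acts cleanly: $\|\phi_\omega^\lambda\|^2_{L^2}$ is constant, $\|\phi_\omega^\lambda\|^2_{\dot{H}^1_c}=\lambda^2\|\phi_\omega\|^2_{\dot{H}^1_c}$, and $\|\phi_\omega^\lambda\|^{\alpha+2}_{L^{\alpha+2}}=\lambda^{d\alpha/2}\|\phi_\omega\|^{\alpha+2}_{L^{\alpha+2}}$. I would first record that $|x|\phi_\omega\in L^2$ (ground states decay exponentially because $\omega>0$), so the variance is finite along the flow and the virial identity is legitimate.

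The variational heart of the argument is to locate $\phi_\omega$ on its dilation curve. Writing $f(\lambda):=S_\omega(\phi_\omega^\lambda)$, the identity $S'_\omega(\phi_\omega)=0$ forces $f'(1)=\langle S'_\omega(\phi_\omega),\partial_\lambda\phi_\omega^\lambda|_{\lambda=1}\rangle=0$, i.e. $P(\phi_\omega)=0$; a direct computation of $f''(1)$ using this relation yields $f''(1)=\frac{d\alpha}{2(\alpha+2)}\big(2-\frac{d\alpha}{2}\big)\|\phi_\omega\|^{\alpha+2}_{L^{\alpha+2}}<0$, precisely because $\alpha>\frac{4}{d}$ (the $L^2$-supercritical condition). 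Hence $\lambda=1$ is a strict local maximum of $f$, so for $\lambda>1$ close to $1$ one has both $S_\omega(\phi_\omega^\lambda)<d(\mathrm{rad},\omega)$ and $P(\phi_\omega^\lambda)=\lambda f'(\lambda)<0$, while $\phi_\omega^\lambda\to\phi_\omega$ in $H^1$ as $\lambda\to1$. These $\phi_\omega^\lambda$ will be the destabilizing data.

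Next I would establish the key characterization
\[
d(\mathrm{rad},\omega)=\inf\big\{S_\omega(v):v\in H^1_{\mathrm{rad}}\setminus\{0\},\ P(v)=0\big\}.
\]
The inequality $\leq$ is immediate from $P(\phi_\omega)=0$. For $\geq$, I would show the infimum is attained — using the compact embedding $H^1_{\mathrm{rad}}\hookrightarrow L^{\alpha+2}$ valid for $\alpha<\frac{4}{d-2}$, as in Proposition \ref{proposition existence radial ground states} — and that a minimizer $w$ satisfies $S'_\omega(w)=\eta\,P'(w)$; pairing with the dilation generator gives $0=P(w)=\eta\langle P'(w),\Lambda w\rangle$ with $\langle P'(w),\Lambda w\rangle=f_w''(1)<0$, forcing $\eta=0$, so $w\in\Ac_{\mathrm{rad},\omega}$ and $S_\omega(w)\geq d(\mathrm{rad},\omega)$. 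With this characterization, the set $\mathcal B:=\{v\in H^1_{\mathrm{rad}}:S_\omega(v)<d(\mathrm{rad},\omega),\ P(v)<0\}$ is invariant under the flow: $S_\omega$ and the mass are conserved (so $S_\omega(u(t))<d(\mathrm{rad},\omega)$ persists and $u(t)\neq0$), and $P(u(t))$ cannot vanish without contradicting the characterization, hence stays negative by continuity.

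Finally I would turn invariance into blow-up through a quantitative virial bound. Analyzing the dilation curve of an arbitrary $v$ with $P(v)<0$ — whose maximum sits at some $\lambda_0<1$ with $P(v^{\lambda_0})=0$, whence $S_\omega(v^{\lambda_0})\geq d(\mathrm{rad},\omega)$ by the characterization — one obtains the uniform estimate $P(v)\leq 2\big(S_\omega(v)-d(\mathrm{rad},\omega)\big)$. Applied along $u(t)$ this gives $\frac{d^2}{dt^2}\||x|u(t)\|^2_{L^2}=8P(u(t))\leq 16\big(S_\omega(u_0)-d(\mathrm{rad},\omega)\big)<0$, a fixed negative constant; since a nonnegative function cannot have second derivative bounded above by a negative constant on a half-line, the maximal existence time must be finite, i.e. $u(t)$ blows up. Taking $u_0=\phi_\omega^\lambda$ with $\lambda>1$ close to $1$ then yields data arbitrarily close to $\phi_\omega$ in $H^1$ with finite-time blow-up, which is exactly strong instability. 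I expect the main obstacle to be the characterization of $d(\mathrm{rad},\omega)$ through the constraint $\{P=0\}$ — specifically the compactness needed to attain the infimum and the verification that the Lagrange multiplier vanishes — together with justifying the virial identity rigorously for the singular potential $c|x|^{-2}$, most cleanly through a localized or radial virial, or a regularization argument available in the literature.
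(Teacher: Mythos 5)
Your variational skeleton matches the paper's: the characterization of the ground state level over the constraint $\{Q=0\}$ (Lemma \ref{lemma S_ome phi_ome}), the invariant set $\mathcal{B}_{\mathrm{rad},\omega}$, the estimate $Q(v)\leq 2\left(S_\omega(v)-S_\omega(\phi_\omega)\right)$ (Lemma \ref{lemma key estimate}), and the scaling analysis of $\lambda\mapsto S_\omega(\phi_\omega^\lambda)$ are all the same; the only (harmless) difference is that you prove the characterization by attaining the infimum on $\{Q=0\}$ and killing a Lagrange multiplier, while the paper reduces to the Nehari constraint of Proposition \ref{proposition existence radial ground states} by a scaling argument. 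The genuine gap is in your blow-up mechanism. You invoke the unlocalized virial identity $\frac{d^2}{dt^2}\||x|u(t)\|^2_{L^2}=8Q(u(t))$ and justify finiteness of the variance by asserting that ground states decay exponentially because $\omega>0$. For the inverse-square potential this is precisely what is \emph{not} known: the regularity and decay of $\phi_\omega$ are open problems (the singularity $c|x|^{-2}$ at the origin obstructs the usual elliptic bootstrap; solutions generically carry an $|x|^{-\sigma}$ singular profile near $0$ and need not lie in $H^2$), so one cannot assert $\phi_\omega\in\Sigma:=H^1\cap L^2(|x|^2dx)$, nor justify the formal virial computation. This is exactly the reason stated in the paper for restricting to \emph{radial} ground states and for replacing the variance by a localized virial quantity.

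Moreover, your fallback remark (``most cleanly through a localized or radial virial'') is not a drop-in replacement for your last step as written. The radial localized virial estimate (Lemma \ref{lemma radial virial estimate}) gives
\[
\frac{d^2}{dt^2}V_{\varphi_R}(t)\leq 8Q(u(t)) + O\left( R^{-2}+R^{-\frac{(d-1)\alpha}{2}}\|u(t)\|^{\frac{\alpha}{2}}_{\dot{H}^1_c}\right),
\]
and the error term involves $\|u(t)\|_{\dot{H}^1_c}$, which is not a priori bounded along the flow (it diverges at blow-up time). Hence the uniform bound $Q(u(t))\leq -a$ obtained from your invariant-set argument does \emph{not} by itself yield $\frac{d^2}{dt^2}V_{\varphi_R}(t)\leq -b<0$. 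The paper closes this by rewriting $8Q(u(t)) = 4d\alpha E(u(t)) - 2(d\alpha-4)\|u(t)\|^2_{\dot{H}^1_c}$, absorbing the error term into the coercive piece $-2(d\alpha-4)\|u(t)\|^2_{\dot{H}^1_c}$ via Young's inequality, and running a two-case analysis (kinetic energy below or above a fixed threshold $\mu$), following Bonheure--Cast\'eras--Gou--Jeanjean, with $\eps$ and $R$ chosen uniformly in $t$. Without this step --- or, alternatively, without a proof that radial ground states actually lie in $\Sigma$ --- your argument does not reach finite-time blow-up.
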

	
	To our knowledge, the usual strategy to show the strong instability of standing waves is to use the characterization of ground states combined with the virial identity. However, in the presence of the inverse-square potential, the existence of ground states is well-known. However, the regularity as well as the decay of ground states are not yet known. Therefore, it is not known that the ground states $\phi_\omega$ belongs to the weighted space $\Sigma: = H^1 \cap L^2(|x|^2 dx)$ in order to apply the virial identity. This is a reason why we only consider the instability of radial ground state standing waves in this paper. If one can show that $\phi_\omega \in \Sigma$, then one can study the instability of non-radial ground state standing waves.
	
	The proof of Theorem $\ref{theorem instability}$ is based on the characterization of the radial ground states and the localized virial estimates. Thanks to the radial symmetry of the ground state, we are able to use the localized virial estimates derived by the second author in \cite{Dinh-inverse} to show the finite time blow-up. We refer the reader to Section $\ref{section instability}$ for more details.
	
	The rest of the paper is organized as follows. In Section $\ref{section existence ground state}$, we give the proof of the existence of radial ground states for $(\ref{elliptic equation})$ given in Proposition $\ref{proposition existence radial ground states}$. The proof of our main result-Theorem $\ref{theorem instability}$ will be given in Section $\ref{section instability}$. 
	
	\section{Existence of radial ground states} \label{section existence ground state}
	\setcounter{equation}{0}
	In this section, we give the proof the existence of radial ground states for $(\ref{elliptic equation})$ given in Proposition $\ref{proposition existence radial ground states}$. The proof of Proposition $\ref{proposition existence radial ground states}$ follows from several lemmas. Let us denote the $\omega$-Hardy functional by
	\[
	H_\omega(v):= \|v\|^2_{\dot{H}^1_c} + \omega \|v\|^2_{L^2}.
	\]
	Using the sharp Hardy inequality
	\[
	\lambda(d) \||x|^{-1} v\|^2_{L^2} \leq \|\nabla v\|^2_{L^2},
	\]
	we see that for $c<\lambda(d)$ and $\omega>0$ fixed, 
	\begin{align}
	H_\omega(v) \sim \|v\|^2_{H^1}. \label{equivalent norms}
	\end{align}
	We note that the action functional can be rewritten as
	\begin{align}
	S_\omega(v):= \frac{1}{2} K_\omega(v) + \frac{\alpha}{2(\alpha+2)} \|v\|^{\alpha+2}_{L^{\alpha+2}} = \frac{1}{\alpha+2} K_\omega(v) + \frac{\alpha}{2(\alpha+2)} H_\omega(v). \label{expressions S_omega}
	\end{align}

	Let us start with the following result.
	\begin{lemma} \label{lemma positivity d_omega}
		$d(\emph{rad},\omega)>0$.
	\end{lemma}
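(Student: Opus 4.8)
The goal is to show that the minimizing value $d(\mathrm{rad},\omega)$ defined in \eqref{minimizing problem} is strictly positive. The plan is to produce a uniform positive lower bound on $S_\omega(v)$ over the constraint set $\{v\in H^1_{\mathrm{rad}}\setminus\{0\}: K_\omega(v)=0\}$. First I would use the second expression for the action functional in \eqref{expressions S_omega}, namely $S_\omega(v)=\frac{1}{\alpha+2}K_\omega(v)+\frac{\alpha}{2(\alpha+2)}H_\omega(v)$. On the constraint set $K_\omega(v)=0$, this collapses to the clean identity
\begin{align}
S_\omega(v)=\frac{\alpha}{2(\alpha+2)}H_\omega(v),
\label{S equals H on constraint}
\end{align}
so the problem reduces to showing that $H_\omega(v)$ is bounded below by a positive constant on the constraint set. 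Since by \eqref{equivalent norms} we have $H_\omega(v)\sim\|v\|^2_{H^1}$, it suffices to bound $\|v\|_{H^1}$ away from zero.

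The key step is to extract such a lower bound from the constraint $K_\omega(v)=0$ itself. Writing out $K_\omega(v)=H_\omega(v)-\|v\|^{\alpha+2}_{L^{\alpha+2}}=0$ gives $H_\omega(v)=\|v\|^{\alpha+2}_{L^{\alpha+2}}$. Next I would apply the Sobolev embedding $H^1\hookrightarrow L^{\alpha+2}$, which is valid precisely because the exponent satisfies $2<\alpha+2<\frac{2d}{d-2}$ (equivalently $\alpha<\frac{4}{d-2}$, one of our standing hypotheses): there is a constant $C>0$ with $\|v\|^{\alpha+2}_{L^{\alpha+2}}\le C\|v\|^{\alpha+2}_{H^1}\le C' H_\omega(v)^{\frac{\alpha+2}{2}}$, using \eqref{equivalent norms} again. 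Combining this with $H_\omega(v)=\|v\|^{\alpha+2}_{L^{\alpha+2}}$ yields $H_\omega(v)\le C' H_\omega(v)^{\frac{\alpha+2}{2}}$, and since $v\ne 0$ forces $H_\omega(v)>0$, I can divide to obtain $H_\omega(v)\ge (C')^{-\frac{2}{\alpha}}>0$, a uniform positive bound because $\frac{\alpha+2}{2}>1$.

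Putting these together, every $v$ in the constraint set satisfies $H_\omega(v)\ge (C')^{-2/\alpha}$, and hence by \eqref{S equals H on constraint} we get $S_\omega(v)\ge\frac{\alpha}{2(\alpha+2)}(C')^{-2/\alpha}$, a constant independent of $v$. Taking the infimum over the constraint set gives $d(\mathrm{rad},\omega)\ge\frac{\alpha}{2(\alpha+2)}(C')^{-2/\alpha}>0$, which is the claim. I do not expect a serious obstacle here: the only points requiring care are verifying the equivalence \eqref{equivalent norms} with positive constants (already supplied via the sharp Hardy inequality and $c<\lambda(d)$, $\omega>0$) and confirming that the Sobolev exponent lies in the subcritical range so that the embedding constant $C$ is finite. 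The mild subtlety worth flagging is that the constraint set is nonempty (so the infimum is genuinely taken over something), but this is guaranteed by the existence of radial ground states invoked in the surrounding discussion; the positivity argument above uses only that any competitor is nonzero.
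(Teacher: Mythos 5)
Your proof is correct and takes essentially the same route as the paper: both reduce to the constraint identity $H_\omega(v)=\|v\|^{\alpha+2}_{L^{\alpha+2}}$, apply the Sobolev embedding together with the norm equivalence $(\ref{equivalent norms})$ to extract a uniform positive lower bound, and conclude via $(\ref{expressions S_omega})$. The only cosmetic difference is that the paper states the lower bound for $\|v\|^{\alpha+2}_{L^{\alpha+2}}$ while you state it for $H_\omega(v)$; on the constraint set these are the same quantity.
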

	
	\begin{proof}
		Let $v \in H^1_{\text{rad}} \backslash \{0\}$ be such that $K_\omega(v) =0$. By the Sobolev embedding, $(\ref{equivalent norms})$ and the fact $H_\omega(v) = \|v\|^{\alpha+2}_{L^{\alpha+2}}$, we have
		\[
		\|v\|^2_{L^{\alpha+2}} \leq C_1 \|v\|^2_{H^1} \leq C_2 H_\omega(v) = C_2 \|v\|^{\alpha+2}_{L^{\alpha+2}},
		\]
		for some $C_1, C_2>0$. This implies that
		\[
		\frac{\alpha}{2(\alpha+2)} \|v\|^{\alpha+2}_{L^{\alpha+2}} \geq \frac{\alpha}{2(\alpha+2)} \left( \frac{1}{C_2}\right)^{\frac{\alpha+2}{\alpha}}.
		\]
		Taking the infimun over $v \in H^1_{\text{rad}} \backslash \{0\}$, we obtain $d(\text{rad},\omega)>0$. 
	\end{proof}

	We now denote the set of all minimizers of $(\ref{minimizing problem})$ by
	\[
	\Mcal_{\text{rad},\omega}:= \left\{ v \in H^1_{\text{rad}} \backslash \{0\} \ : \ K_\omega(v) =0, \ S_\omega(v) = d(\text{rad},\omega) \right\}. 
	\]
	
	\begin{lemma} \label{lemma non empty M_omega}
		The set $\Mcal_{\emph{rad},\omega}$ is non-empty.
	\end{lemma}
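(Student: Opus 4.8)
The plan is to attack the constrained minimization problem \eqref{minimizing problem} by the direct method, exploiting the radial symmetry to regain compactness. First I would pick a minimizing sequence $(v_n) \subset H^1_{\text{rad}} \backslash \{0\}$ with $K_\omega(v_n) = 0$ and $S_\omega(v_n) \to d(\text{rad},\omega)$. Since $K_\omega(v_n)=0$, the second expression in \eqref{expressions S_omega} gives $S_\omega(v_n) = \frac{\alpha}{2(\alpha+2)} H_\omega(v_n)$, and because $H_\omega \sim \|\cdot\|^2_{H^1}$ by \eqref{equivalent norms}, the convergence of $S_\omega(v_n)$ forces $(v_n)$ to be bounded in $H^1$.

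Next I would extract a subsequence with $v_n \rightharpoonup v$ weakly in $H^1_{\text{rad}}$. The crucial compactness input is the compact embedding $H^1_{\text{rad}}(\R^d) \hookrightarrow L^{\alpha+2}(\R^d)$ (valid since $d \geq 3$ and $2 < \alpha+2 < \frac{2d}{d-2}$), which yields $v_n \to v$ strongly in $L^{\alpha+2}$. To see that $v \ne 0$, note that $H_\omega(v_n) = \|v_n\|^{\alpha+2}_{L^{\alpha+2}}$ together with $S_\omega(v_n) = \frac{\alpha}{2(\alpha+2)} H_\omega(v_n) \to d(\text{rad},\omega) > 0$ (Lemma \ref{lemma positivity d_omega}) keeps $\|v_n\|^{\alpha+2}_{L^{\alpha+2}}$ bounded away from zero; the strong convergence then gives $\|v\|_{L^{\alpha+2}} > 0$.

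It remains to show $v \in \Mcal_{\text{rad},\omega}$. Because $H_\omega$ is (equivalent to) a squared Hilbert norm, it is weakly lower semicontinuous, so $H_\omega(v) \le \liminf H_\omega(v_n)$; combined with the strong $L^{\alpha+2}$ convergence this gives $K_\omega(v) \le \liminf K_\omega(v_n) = 0$. If $K_\omega(v) < 0$ I would rescale: setting $f(\lambda) = K_\omega(\lambda v) = \lambda^2 H_\omega(v) - \lambda^{\alpha+2} \|v\|^{\alpha+2}_{L^{\alpha+2}}$, one has $f > 0$ for small $\lambda > 0$ and $f(1) < 0$, so there is $\lambda_0 \in (0,1)$ with $K_\omega(\lambda_0 v) = 0$. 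Then $\lambda_0 v$ is admissible and $S_\omega(\lambda_0 v) = \frac{\alpha}{2(\alpha+2)} \lambda_0^2 H_\omega(v) < \frac{\alpha}{2(\alpha+2)} H_\omega(v) \le d(\text{rad},\omega)$, contradicting the definition of the infimum. Hence $K_\omega(v) = 0$, and then $S_\omega(v) = \frac{\alpha}{2(\alpha+2)} H_\omega(v) \le \liminf S_\omega(v_n) = d(\text{rad},\omega)$, while the reverse inequality holds since $v$ is admissible; thus $S_\omega(v) = d(\text{rad},\omega)$ and $v \in \Mcal_{\text{rad},\omega}$.

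The main obstacle is the compactness step: without radial symmetry a minimizing sequence could spread its mass to infinity and the weak limit might vanish. The radial compact Sobolev embedding (Strauss' lemma) is exactly what rules this out, which is also the reason the whole argument — and ultimately Theorem \ref{theorem instability} — is restricted to the radial class. A secondary technical point is upgrading $K_\omega(v) \le 0$ to $K_\omega(v) = 0$, which is handled by the rescaling argument above.
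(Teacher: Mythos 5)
Your proof is correct and takes essentially the same route as the paper: boundedness of a minimizing sequence via $(\ref{expressions S_omega})$ and $(\ref{equivalent norms})$, the compact embedding $H^1_{\text{rad}} \hookrightarrow L^{\alpha+2}$ to get strong $L^{\alpha+2}$ convergence and a nonzero limit, the conclusion $K_\omega(v)\leq 0$, and a scaling argument to rule out $K_\omega(v)<0$. The only cosmetic difference is that you invoke weak lower semicontinuity of $H_\omega$ as an equivalent squared Hilbert norm, whereas the paper derives the same inequality from the splitting $H_\omega(v_n) = H_\omega(v_0) + H_\omega(r_n) + o_n(1)$ with $v_n = v_0 + r_n$, $r_n \rightharpoonup 0$ — these are the same fact.
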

	
	\begin{proof}
		Let $(v_n)_{n\geq 1}$ be a minimizing sequence of $d(\text{rad},\omega)$, i.e. $v_n \in H^1_{\text{rad}} \backslash \{0\}$, $K_\omega(v_n) =0$ and $S_\omega(v_n) \rightarrow d(\text{rad},\omega)$ as $n\rightarrow \infty$. Since $K_\omega(v_n) = 0$, we have $H_\omega(v_n) = \|v_n\|^{\alpha+2}_{L^{\alpha+2}}$ for any $n\geq 1$. Using $(\ref{expressions S_omega})$, the fact $S_\omega(v_n) \rightarrow d(\text{rad},\omega)$ as $n\rightarrow \infty$ implies that 
		\[
		\frac{\alpha}{2(\alpha+2)} H_\omega(v_n) = \frac{\alpha}{2(\alpha+2)} \|v_n\|^{\alpha+2}_{L^{\alpha+2}} \rightarrow d(\text{rad},\omega),
		\]
		as $n\rightarrow \infty$. We infer that there exists $C>0$ such that 
		\[
		H_\omega(v_n) \leq \frac{2(\alpha+2)}{\alpha} d(\text{rad},\omega) + C,
		\]
		for all $n\geq 1$. It follows from $(\ref{equivalent norms})$ that $(v_n)_{n\geq 1}$ is a bounded sequence in $H^1_{\text{rad}}$. Using the compact embedding $H^1_{\text{rad}} \hookrightarrow L^{\alpha+2}$, there exists $v_0 \in H^1_{\text{rad}}$ such that 
		\[
		v_n \rightharpoonup v_0 \text{ weakly in } H^1 \text{ and strongly in } L^{\alpha+2} \text{ as } n \rightarrow \infty. 
		\]
		Writting $v_n= v_0 + r_n$, where $r_n \rightharpoonup 0$ weakly in $H^1$ as $n\rightarrow \infty$. We have
		\[
		K_\omega(v_n) = H_\omega(v_n) - \|v_n\|^{\alpha+2}_{L^{\alpha+2}} = H_\omega(v_0) + H_\omega(r_n) - \|v_n\|^{\alpha+2}_{L^{\alpha+2}} + o_n(1),
		\]
		as $n\rightarrow \infty$. Here $o_n(1)$ means that $o_n(1) \rightarrow 0$ as $n\rightarrow \infty$. Since $K_\omega(v_n) =0$ and $H_\omega(r_n) \geq 0$ for all $n\geq 1$, we get
		\[
		H_\omega(v_0) \leq \|v_n\|^{\alpha+2}_{L^{\alpha+2}} + o_n(1),
		\]
		as $n\rightarrow \infty$. Taking the limit $n\rightarrow \infty$, we obtain
		\[
		H_\omega(v_0) \leq \frac{2(\alpha+2)}{\alpha} d(\text{rad},\omega).
		\]
		Since $v_n \rightarrow v_0$ strongly in $L^{\alpha+2}$, it follows that
		\[
		\|v_0\|^{\alpha+2}_{L^{\alpha+2}} =\lim_{n\rightarrow \infty} \|v_n\|^{\alpha+2}_{L^{\alpha+2}} = \frac{2(\alpha+2)}{\alpha} d(\text{rad},\omega).
		\]
		We thus get $K_\omega(v_0) \leq 0$. Now suppose that $K_\omega(v_0) <0$. We have for $\mu>0$,
		\[
		K_\omega(\mu v_0) = \mu^2 H_\omega(v_0) - \mu^{\alpha+2} \|v_0\|^{\alpha+2}_{L^{\alpha+2}}. 
		\]
		It is easy to see that the equation $K_\omega(\mu v_0)=0$ admits a unique non-zero solution
		\[
		\mu_0 = \left(\frac{H_\omega(v_0)}{\|v_0\|^{\alpha+2}_{L^{\alpha+2}}} \right)^{\frac{1}{\alpha}}.
		\]
		Since $K_\omega(v_0)<0$, we have $\mu_0 \in (0,1)$. By the definition of $d(\text{rad},\omega)$ and $(\ref{expressions S_omega})$, we get
		\begin{align*}
		d(\text{rad},\omega) \leq S_\omega(\mu_0 v_0) = \frac{\alpha}{2(\alpha+2)} H_\omega(\mu_0 v_0) &= \mu_0^2 \frac{\alpha}{2(\alpha+2)} H_\omega(v_0) \\
		&< \frac{\alpha}{2(\alpha+2)} H_\omega(v_0) \leq d(\text{rad},\omega),
		\end{align*}
		which is a contradiction. Therefore, $K_\omega(v_0)=0$. Moreover, 
		\[
		S_\omega(v_0) = \frac{\alpha}{2(\alpha+2)} \|v_0\|^{\alpha+2}_{L^{\alpha+2}} = d(\text{rad},\omega).
		\]
		This shows that $v_0$ is a minimizer of $d(\text{rad},\omega)$. The proof is complete.
	\end{proof}

	\begin{lemma} \label{lemma M_omega subset G_omega}
		$\Mcal_{\emph{rad},\omega} \subset \Gc_{\emph{rad},\omega}$.
	\end{lemma}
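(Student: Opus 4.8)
The plan is to verify that every constrained minimizer $v \in \Mcal_{\text{rad},\omega}$ is a genuine critical point of the action $S_\omega$ (hence lies in $\Ac_{\text{rad},\omega}$), and then that it realizes the smallest value of $S_\omega$ among all such critical points. First I would invoke a Lagrange multiplier argument. Both $S_\omega$ and $K_\omega$ are $C^1$ functionals on $H^1_{\text{rad}}$, since the nonlinear term is Sobolev-subcritical ($\alpha+2 < \frac{2d}{d-2}$), and $v$ minimizes $S_\omega$ under the single scalar constraint $K_\omega(\cdot)=0$. Hence there is $\eta \in \R$ with $S'_\omega(v) = \eta\, K'_\omega(v)$, provided $K'_\omega(v)\neq 0$.

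To determine $\eta$, I would pair this identity with $v$ itself. Since $K_\omega(v) = \scal{S'_\omega(v),v}$ by the very definition $K_\omega(v) = \partial_\lambda S_\omega(\lambda v)|_{\lambda=1}$, the pairing gives $K_\omega(v) = \eta \scal{K'_\omega(v),v}$, whose left-hand side vanishes because $v \in \Mcal_{\text{rad},\omega}$. A direct differentiation of $K_\omega$ yields $\scal{K'_\omega(v),v} = 2H_\omega(v) - (\alpha+2)\|v\|^{\alpha+2}_{L^{\alpha+2}}$, and the constraint $H_\omega(v) = \|v\|^{\alpha+2}_{L^{\alpha+2}}$ reduces this to $-\alpha H_\omega(v)$. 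As $v\neq 0$, the norm equivalence $(\ref{equivalent norms})$ gives $H_\omega(v) \sim \|v\|^2_{H^1} > 0$, so $\scal{K'_\omega(v),v} = -\alpha H_\omega(v) < 0$. This forces $\eta = 0$, whence $S'_\omega(v) = 0$ and $v \in \Ac_{\text{rad},\omega}$.

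For the minimality, I would observe that any $w \in \Ac_{\text{rad},\omega}$ satisfies $\scal{S'_\omega(w),w} = 0$, i.e. $K_\omega(w) = 0$, so $w$ is admissible in the minimization problem $(\ref{minimizing problem})$ and therefore $S_\omega(w) \geq d(\text{rad},\omega) = S_\omega(v)$. Combined with the previous step, this shows that $v$ minimizes $S_\omega$ over $\Ac_{\text{rad},\omega}$, which is precisely the assertion $v \in \Gc_{\text{rad},\omega}$.

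The only delicate point is the nondegeneracy hypothesis $K'_\omega(v)\neq 0$ needed to apply the multiplier rule, i.e. to ensure that $\{K_\omega = 0\}$ is a smooth manifold near $v$. But this is automatic from the same computation: $\scal{K'_\omega(v),v} = -\alpha H_\omega(v) \neq 0$ already forces $K'_\omega(v)\neq 0$. Thus the multiplier argument is justified and the inclusion follows.
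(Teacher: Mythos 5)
Your proof is correct and follows essentially the same route as the paper: a Lagrange multiplier argument, pairing $S'_\omega(v)=\eta K'_\omega(v)$ with $v$ and using $\scal{K'_\omega(v),v}=-\alpha H_\omega(v)<0$ to force $\eta=0$, then noting every $w\in\Ac_{\text{rad},\omega}$ satisfies $K_\omega(w)=0$ and is thus admissible in $(\ref{minimizing problem})$. Your explicit check that $K'_\omega(v)\neq 0$ (justifying the multiplier rule) is a small point of rigor the paper leaves implicit, but the argument is otherwise identical.
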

	
	\begin{proof}
		Let $\phi \in \Mcal_{\text{rad},\omega}$. Since $K_\omega(\phi) =0$, we have $H_\omega(\phi)=\|\phi\|^{\alpha+2}_{L^{\alpha+2}}$. Since $\phi$ is a minimizer of $d(\text{rad},\omega)$, there exists a Lagrange multiplier $\mu \in \R$ such that $S'_\omega(\phi) = \mu K'_\omega(\phi)$. We thus have
		\[
		0 = K_\omega(\phi) = \scal{S'_\omega(\phi), \phi} = \mu \scal{K'_\omega(\phi), \phi}.
		\]
		It is easy to see that
		\[
		K'_\omega(\phi) = -2\Delta \phi + 2\omega \phi -2 c |x|^{-2} \phi - (\alpha+2)|\phi|^\alpha \phi.
		\]
		Therefore, 
		\[
		\scal{K'_\omega(\phi),\phi}= 2 H_\omega(\phi) - (\alpha+2) \|\phi\|^{\alpha+2}_{L^{\alpha+2}} = -\alpha \|\phi\|^{\alpha+2}_{L^{\alpha+2}}<0.
		\]
		This implies that $\mu=0$, hence $S'_\omega(\phi) =0$. In particular, we have $\phi \in \Ac_{\text{rad},\omega}$. To prove $\phi \in \Gc_{\text{rad},\omega}$, it remains to show that $S_\omega(\phi) \leq S_\omega(v)$ for all $v \in \Ac_{\text{rad},\omega}$. To see this, let $v \in \Ac_{\text{rad},\omega}$. We have
		\[
		K_\omega(v) = \scal{S'_\omega(v),v} =0.
		\]
		By definition of $\Mcal_{\text{rad},\omega}$, we have $S_\omega(\phi) \leq S_\omega(v)$. The proof is complete. 
	\end{proof}

	\begin{lemma} \label{lemma G_omega subset M_omega}
		$\Gc_{\emph{rad},\omega} \subset \Mcal_{\emph{rad},\omega}$.
	\end{lemma}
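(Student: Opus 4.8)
The plan is to deduce the inclusion directly from the already-established lemmas together with the fact that the Nehari functional is the directional derivative of the action. Let $\phi \in \Gc_{\text{rad},\omega}$. By Definition $\ref{definition radial ground state}$, $\phi \in \Ac_{\text{rad},\omega}$, so $S'_\omega(\phi) = 0$. Since $K_\omega$ arises precisely as $K_\omega(v) = \left.\partial_\lambda S_\omega(\lambda v)\right|_{\lambda=1} = \scal{S'_\omega(v), v}$, pairing with $\phi$ gives $K_\omega(\phi) = \scal{S'_\omega(\phi), \phi} = 0$. Thus $\phi$ is an admissible competitor in the minimizing problem $(\ref{minimizing problem})$, and therefore $S_\omega(\phi) \geq d(\text{rad},\omega)$.

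For the reverse inequality, I would exploit the nonemptiness of $\Mcal_{\text{rad},\omega}$ established in Lemma $\ref{lemma non empty M_omega}$ together with the inclusion $\Mcal_{\text{rad},\omega} \subset \Gc_{\text{rad},\omega}$ from Lemma $\ref{lemma M_omega subset G_omega}$. Pick any $\psi \in \Mcal_{\text{rad},\omega}$; then $\psi \in \Gc_{\text{rad},\omega} \subset \Ac_{\text{rad},\omega}$, and $S_\omega(\psi) = d(\text{rad},\omega)$ by the definition of $\Mcal_{\text{rad},\omega}$. Since $\phi$ is a ground state, it minimizes $S_\omega$ over $\Ac_{\text{rad},\omega}$, and as $\psi \in \Ac_{\text{rad},\omega}$ we obtain $S_\omega(\phi) \leq S_\omega(\psi) = d(\text{rad},\omega)$. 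Combining the two inequalities yields $S_\omega(\phi) = d(\text{rad},\omega)$, which together with $K_\omega(\phi) = 0$ and $\phi \in H^1_{\text{rad}} \backslash \{0\}$ shows $\phi \in \Mcal_{\text{rad},\omega}$, as desired.

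The argument is essentially bookkeeping with the definitions once the genuinely analytic input---the nonemptiness of $\Mcal_{\text{rad},\omega}$ in Lemma $\ref{lemma non empty M_omega}$, which rests on the compact embedding $H^1_{\text{rad}} \hookrightarrow L^{\alpha+2}$---is in hand. Accordingly I do not anticipate a real obstacle here; the only point requiring care is the identity $K_\omega(\phi) = \scal{S'_\omega(\phi), \phi}$, which guarantees that any critical point of $S_\omega$ automatically satisfies the Nehari constraint, so that every radial ground state is an admissible competitor for $d(\text{rad},\omega)$. Together with Lemma $\ref{lemma M_omega subset G_omega}$ this will give $\Gc_{\text{rad},\omega} = \Mcal_{\text{rad},\omega}$, completing the variational characterization needed for Proposition $\ref{proposition existence radial ground states}$.
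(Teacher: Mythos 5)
Your proof is correct and follows essentially the same route as the paper: both use the nonemptiness of $\Mcal_{\text{rad},\omega}$, the inclusion $\Mcal_{\text{rad},\omega} \subset \Gc_{\text{rad},\omega}$ to produce a comparison element $\psi \in \Ac_{\text{rad},\omega}$ with $S_\omega(\psi)=d(\text{rad},\omega)$, and the identity $K_\omega(\phi)=\scal{S'_\omega(\phi),\phi}=0$ for critical points. The only (immaterial) difference is that you obtain $S_\omega(\phi)=d(\text{rad},\omega)$ by combining two inequalities, one of which uses $\phi$ as a competitor in the minimization problem, whereas the paper gets the equality directly from the fact that any two radial ground states have the same action.
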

	\begin{proof}
		Let $\phi \in \Gc_{\text{rad},\omega}$. Since $\Mcal_{\text{rad},\omega}$ is not empty, we take $\psi \in \Mcal_{\text{rad},\omega}$. By Lemma $\ref{lemma M_omega subset G_omega}$, $\psi \in \Gc_{\text{rad},\omega}$.
		In particular, $S_\omega(\phi) = S_\omega(\psi)$. Since $\psi \in \Mcal_{\text{rad},\omega}$, we get
		\[
		S_\omega(\phi) = S_\omega(\psi) = d(\text{rad},\omega).
		\]
		It remains to show that $K_\omega(\phi) =0$. Since $\phi \in \Ac_{\text{rad},\omega}$, we have $S'_\omega(\phi) =0$, hence $K_\omega(\phi) = \scal{S'_\omega(\phi),\phi} =0$. Therefore, $\phi \in \Mcal_{\text{rad},\omega}$ and the proof is complete.
	\end{proof}
	
	\noindent {\it Proof of Proposition $\ref{proposition existence radial ground states}$.} Proposition $\ref{proposition existence radial ground states}$ follows immediately from Lemmas $\ref{lemma non empty M_omega}$, $\ref{lemma M_omega subset G_omega}$ and $\ref{lemma G_omega subset M_omega}$.
	\defendproof

	\section{Instability of radial standing waves} \label{section instability}
	\setcounter{equation}{0}
	In this section, we give the proof of the instability of radial ground state standing waves given in Theorem $\ref{theorem instability}$. Let us start by recalling the local well-posedness in the energy space $H^1$ for $(\ref{inverse square NLS})$ proved by Okazawa-Suzuki-Yokota \cite{OkazawaSuzukiYokota}. 
	
	\begin{theorem}[Local well-posedness \cite{OkazawaSuzukiYokota}] \label{theorem local theory}
		Let $d\geq 3$, $c\ne 0$ be such that $c<\lambda(d)$ and $\frac{4}{d} <\alpha<\frac{4}{d-2}$. Then for any $u_0 \in H^1$, there exists $T \in (0, +\infty]$ and a maximal solution $u \in C([0,T), H^1)$ of $(\ref{inverse square NLS})$. The maximal time of existence satisfies either $T=+\infty$ or $T<+\infty$ and
		\[
		\lim_{t\uparrow T} \|\nabla u(t)\|_{L^2} =\infty.
		\]
		Moreover, the local solution enjoys the conservation of mass and energy
		\begin{align*}
		M(u(t)) &= \int |u(t,x)|^2 dx = M(u_0), \\
		E(u(t)) &= \frac{1}{2} \int |\nabla u(t,x)|^2 dx - \frac{c}{2} \int |x|^{-2} |u(t,x)|^2 dx - \frac{1}{\alpha+2} \int |u(t,x)|^{\alpha+2} dx \\
		&=E(u_0),
		\end{align*}
		for any $t\in [0,T)$.
	\end{theorem}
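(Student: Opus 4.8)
The plan is to realize the equation through the self-adjoint Schr\"odinger operator with inverse-square potential and then run a Strichartz-based contraction, exactly as in Cazenave's treatment of the free nonlinear Schr\"odinger equation. First I would give meaning to the propagator. Since $c<\lambda(d)$, the sharp Hardy inequality shows that the quadratic form $v \mapsto \|v\|^2_{\dot H^1_c} = \|\nabla v\|^2_{L^2} - c\||x|^{-1}v\|^2_{L^2}$ is nonnegative and comparable to $\|\nabla v\|^2_{L^2}$ on $C_c^\infty(\R^d\setminus\{0\})$; its form (Friedrichs) extension defines a nonnegative self-adjoint operator $\mathcal{L}_c := -\Delta - c|x|^{-2}$ on $L^2(\R^d)$, and by Stone's theorem $e^{-it\mathcal{L}_c}$ is a unitary group on $L^2$. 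Writing the equation as $i\partial_t u = \mathcal{L}_c u - |u|^\alpha u$, a solution is a fixed point of the Duhamel map
\[
\Phi(u)(t) := e^{-it\mathcal{L}_c} u_0 + i \int_0^t e^{-i(t-s)\mathcal{L}_c}\big(|u|^\alpha u\big)(s)\,ds.
\]

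Two harmonic-analytic inputs adapted to $\mathcal{L}_c$ replace the classical ingredients. The first is the family of Strichartz estimates for $e^{-it\mathcal{L}_c}$ over the Schr\"odinger-admissible pairs, established in \cite{BurPlaStaZad}; because $\mathcal{L}_c$ scales exactly like $-\Delta$, the admissibility relation $\frac{2}{q}+\frac{d}{r}=\frac{d}{2}$ is unchanged. The second is the equivalence of the $\mathcal{L}_c$-adapted Sobolev norms with the usual ones, $\|\mathcal{L}_c^{s/2} v\|_{L^2} \sim \|(-\Delta)^{s/2} v\|_{L^2}$, valid for a range of $s$ (including $s=1$ by \eqref{equivalent norms} together with Hardy) whose width depends on $c$, proved in \cite{KilMiaVisZhaZhe-energy}. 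This second point is what lets me transfer the fractional product and chain rule estimates for the energy-subcritical nonlinearity $|u|^\alpha u$, which are phrased for the flat spaces $W^{1,r}$, into the $\mathcal{L}_c$-adapted spaces in which Strichartz is stated.

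With these in hand I would close the contraction on a ball of $C([0,T];H^1) \cap L^q([0,T];W^{1,r})$ for a suitable admissible pair $(q,r)$ chosen so that the hypothesis $\alpha<\frac{4}{d-2}$ makes the nonlinearity energy-subcritical; Strichartz, the norm equivalence, and H\"older in time produce a positive power of $T$ in front of the nonlinear term, so $\Phi$ is a contraction for $T=T(\|u_0\|_{H^1})$ small. Patching these local solutions yields a maximal solution $u \in C([0,T),H^1)$. The blow-up alternative then follows from the standard continuation argument: the existence time depends only on $\|u_0\|_{H^1}$, so if $T<\infty$ while $\sup_{t<T}\|\nabla u(t)\|_{L^2}<\infty$ then, using mass conservation to control the $L^2$ part and \eqref{equivalent norms} to pass to the full $H^1$ norm, one could restart the solution past $T$, contradicting maximality; hence $\|\nabla u(t)\|_{L^2}\to\infty$ as $t\uparrow T$.

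Finally the conservation laws. Mass conservation is immediate from the unitarity of $e^{-it\mathcal{L}_c}$ together with the gauge invariance $\operatorname{Im}\int |u|^\alpha u\,\bar u\,dx = 0$. Energy conservation is the delicate point: pairing the equation with $\partial_t\bar u$ and integrating gives $\frac{d}{dt}E(u)=0$ formally, but this is not justified at the $H^1$ regularity of the solution, so I would first prove it for smooth approximating data, where the solution is regular enough that every integration by parts is legitimate and the Hardy term stays finite and continuous in $H^1$ via \eqref{hardy functional} and Hardy's inequality, and then pass to the limit using the continuous dependence that comes out of the contraction estimates. The main obstacle throughout is not the fixed-point scheme itself but securing the two operator-adapted tools, the Strichartz estimates and especially the Sobolev norm equivalence, whose admissible range of exponents shrinks as $c$ increases toward $\lambda(d)$, and then checking that the exponents demanded by the nonlinear estimates for the given range of $\alpha$ genuinely lie inside that range.
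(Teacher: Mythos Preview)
Your contraction-mapping approach via Strichartz estimates for $e^{-it\mathcal L_c}$ and the Sobolev-norm equivalence of \cite{KilMiaVisZhaZhe-energy} is a legitimate route, but it is not the one the paper invokes. The paper cites \cite{OkazawaSuzukiYokota}, whose argument is an \emph{energy method} (a refinement of Cazenave's compactness scheme) for existence, with the Strichartz estimates of \cite{BurPlaStaZad} used only to secure uniqueness. The difference matters: the energy method works entirely in $H^1$ and needs nothing beyond the $L^2$ equivalence \eqref{equivalent norms}, so it covers every $c<\lambda(d)$ and every energy-subcritical $\alpha$ uniformly. Your scheme, by contrast, requires the $L^r$ equivalence $\|\mathcal L_c^{1/2}v\|_{L^r}\sim\|\nabla v\|_{L^r}$ at the specific exponents dictated by the nonlinearity, and as you yourself note, the admissible window for $r$ shrinks as $c\uparrow\lambda(d)$; you have not verified that the exponents needed for the full range $\tfrac{4}{d}<\alpha<\tfrac{4}{d-2}$ actually fall inside that window for every $c$, and this is exactly the obstruction that motivated \cite{OkazawaSuzukiYokota} to avoid the Strichartz fixed-point argument for existence in the first place. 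So your sketch is sound as a strategy and would succeed for a (possibly $c$-dependent) subrange of parameters, but to match the full generality of the stated theorem you would either need to carry out that exponent check explicitly or fall back on the energy method the paper references.
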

	
	We refer the reader to \cite[Proposition 5.1]{OkazawaSuzukiYokota} for the proof of the above result. Note that the existence of local solution is based on a refined energy method of the well-known energy method proposed by Cazenave \cite[Chapter 3]{Cazenave}. The uniqueness of local solutions follows from Strichartz estimates proved by Burq-Planchon-Stalker-Zadel \cite{BurPlaStaZad}.

	We next recall the so-called Pohozaev's identities for $(\ref{elliptic equation})$. We give the proof for the reader's convenience.
	\begin{lemma} \label{lemma pohozaev identities}
		Let $\omega>0$. If $\phi_\omega \in H^1$ is a solution to $(\ref{elliptic equation})$, then 
		\[
		\|\phi_\omega\|^2_{\dot{H}^1_c} + \omega \|\phi_\omega\|^2_{L^2} - \|\phi_\omega\|^{\alpha+2}_{L^{\alpha+2}} =0,
		\]
		and
		\[
		\left(1-\frac{d}{2}\right) \|\phi_\omega\|^2_{\dot{H}^1_c} -\frac{d\omega}{2} \|\phi_\omega\|^2_{L^2} + \frac{d}{\alpha+2} \|\phi_\omega\|^{\alpha+2}_{L^{\alpha+2}}.
		\]
	\end{lemma}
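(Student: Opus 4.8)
The plan is to derive both Pohozaev identities from the fact that $\phi_\omega$ solves the elliptic equation $(\ref{elliptic equation})$, i.e. $S'_\omega(\phi_\omega)=0$, by testing against two natural variations. The first identity is the statement $K_\omega(\phi_\omega)=0$, which follows immediately by pairing the equation with $\phi_\omega$ itself. Indeed, taking the $L^2$ inner product of $(\ref{elliptic equation})$ with $\phi_\omega$ and integrating by parts in the term $-\Delta \phi_\omega$ produces $\|\phi_\omega\|^2_{\dot H^1_c} + \omega\|\phi_\omega\|^2_{L^2} - \|\phi_\omega\|^{\alpha+2}_{L^{\alpha+2}} = 0$, which is exactly the asserted first relation and coincides with $K_\omega(\phi_\omega)=\scal{S'_\omega(\phi_\omega),\phi_\omega}=0$.

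For the second identity I would use the dilation (scaling) variation. The key computation is to differentiate the functional along the scaled family $\phi^\lambda(x) := \phi_\omega(\lambda x)$ and use that $\phi_\omega$ is a critical point. Concretely, since the inverse-square potential $c|x|^{-2}$ is homogeneous of degree $-2$ and scales exactly like the Laplacian, the Hardy functional $\|\phi^\lambda\|^2_{\dot H^1_c}$ transforms by a clean power of $\lambda$; similarly the $L^2$ and $L^{\alpha+2}$ norms pick up explicit powers of $\lambda$ from the change of variables $y=\lambda x$. First I would record these scaling laws: in dimension $d$ one gets $\|\phi^\lambda\|^2_{\dot H^1_c}=\lambda^{2-d}\|\phi_\omega\|^2_{\dot H^1_c}$, $\|\phi^\lambda\|^2_{L^2}=\lambda^{-d}\|\phi_\omega\|^2_{L^2}$, and $\|\phi^\lambda\|^{\alpha+2}_{L^{\alpha+2}}=\lambda^{-d}\|\phi_\omega\|^{\alpha+2}_{L^{\alpha+2}}$. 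Then $S_\omega(\phi^\lambda)$ is an explicit function of $\lambda$, and the criticality condition forces $\left.\tfrac{d}{d\lambda}S_\omega(\phi^\lambda)\right|_{\lambda=1}=0$. Differentiating and setting $\lambda=1$ yields precisely the second Pohozaev relation
\[
\left(1-\frac{d}{2}\right)\|\phi_\omega\|^2_{\dot H^1_c} -\frac{d\omega}{2}\|\phi_\omega\|^2_{L^2} + \frac{d}{\alpha+2}\|\phi_\omega\|^{\alpha+2}_{L^{\alpha+2}} = 0.
\]
(I read the displayed statement as asserting that this expression vanishes; the ``$=0$'' is evidently a typo in the excerpt.)

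The main obstacle is justifying these manipulations rigorously at the regularity level we actually have. Since $\phi_\omega$ is only known to lie in $H^1$ (its decay and higher regularity are, as the introduction stresses, not established), one must verify that the scaling derivative can be computed and that the critical-point relation $\left.\tfrac{d}{d\lambda}S_\omega(\phi^\lambda)\right|_{\lambda=1}=\scal{S'_\omega(\phi_\omega), x\cdot\nabla\phi_\omega}=0$ is legitimate; the singular weight $|x|^{-2}$ and the unbounded vector field $x\cdot\nabla$ make the integration by parts in the potential term delicate. The standard remedy is to work by approximation or truncation—multiplying the generator of dilations by a cutoff $\chi(x/R)$, differentiating the truncated functional where all integrals are manifestly finite and the integrations by parts are valid, and then passing to the limit $R\to\infty$ using the $\dot H^1_c$--$H^1$ norm equivalence $(\ref{equivalent norms})$ to control the error terms. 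Alternatively, since one only needs the identity as an algebraic consequence of the equation, one can obtain it by pairing $(\ref{elliptic equation})$ against the dilation field $x\cdot\nabla\phi_\omega$ and carefully handling the boundary/weight contributions, again via a limiting argument. Once both identities are in hand, the proof is complete.
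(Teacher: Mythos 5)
Your treatment of the first identity coincides with the paper's: pair \eqref{elliptic equation} with $\phi_\omega$ and integrate by parts. This is rigorous at $H^1$ regularity, since $\scal{S'_\omega(\phi_\omega),\phi_\omega}$ is well defined (Hardy plus Sobolev make every term finite), and it is exactly $K_\omega(\phi_\omega)=0$.

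For the second identity there is a genuine gap, and you have located it yourself but not filled it. Your primary route --- compute $g(\lambda)=S_\omega(\phi_\omega(\lambda\cdot))$ from the scaling laws and assert $g'(1)=0$ from criticality --- is not a proof at $H^1$ regularity: the curve $\lambda\mapsto\phi_\omega(\lambda\cdot)$ is in general not differentiable as a map into $H^1$ (its formal derivative $x\cdot\nabla\phi_\omega$ need not even lie in $L^2$), so the chain rule $g'(1)=\scal{S'_\omega(\phi_\omega),\,x\cdot\nabla\phi_\omega}=0$ is simply not available; the explicit formula for $g$ by itself proves nothing about $g'(1)$. Consequently the whole content of the lemma sits inside your ``remedy,'' which you leave as a one-sentence sketch. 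What the paper actually does is the multiplier argument you mention as the alternative: multiply \eqref{elliptic equation} by $x\cdot\nabla\phi_\omega$ and integrate over annuli $P(r,R)=\{r\le|x|\le R\}$, where all integrations by parts are legitimate, and then dispose of the resulting boundary integrals over $\partial B_r$ and $\partial B_R$. The device that accomplishes this --- absent from your sketch --- is a selection of radii: since the density $A(\phi_\omega)=\frac{1}{2}|\nabla\phi_\omega|^2-\frac{\omega}{2}|\phi_\omega|^2+\frac{c}{2}|x|^{-2}|\phi_\omega|^2+\frac{1}{\alpha+2}|\phi_\omega|^{\alpha+2}$ is integrable on $\R^d$, its polar-coordinate representation forces $\liminf_{r\to0}\,r\int_{\partial B_r}A\,dS=0$ and likewise at infinity (otherwise $\int_{\partial B_r}A\,dS$ would fail to be integrable in $r$), so there are sequences $r_n\to0$, $R_n\to\infty$ along which the boundary terms vanish, and one passes to the limit along these. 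Your smooth-truncation variant (multiplier $\chi_{\delta,R}(x)\,x\cdot\nabla\phi_\omega$) can indeed be made to work, and would even avoid the radii-selection trick, because it replaces boundary integrals by bulk error terms of the form $\int(x\cdot\nabla\chi_{\delta,R})B\,dx$ with $B\in L^1$, which vanish by absolute continuity of the integral; but note that the cutoff must be placed at the origin as well as at infinity --- your sketch truncates only at infinity with $\chi(x/R)$, whereas the singular potential means $\phi_\omega$ has no usable $H^2$ regularity near $x=0$, so the multiplier computation cannot be performed across the origin. In short: right obstruction identified, viable fix named, but the step that constitutes the actual proof is not carried out.
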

	\begin{proof}
		Multiplying both sides of $(\ref{elliptic equation})$ with $\phi_\omega$ and integrating over $\R^d$, we obtain easily the first identity. Let us prove the second identity. Due to the singularity of  the inverse-square potential at zero, we multiply both sides of $(\ref{elliptic equation})$ with $x \cdot \nabla \phi_\omega$ and integrate on $P(r,R):= \{ x \in \R^d \ : \ r \leq |x| \leq R\}$ for some $R>r>0$. We have 
		\begin{align*}
		-\int_{P(r,R)} \Delta \phi_\omega (x\cdot \nabla \phi_\omega) dx = \int_{P(r,R)} \nabla \phi_\omega \cdot \nabla(x\cdot \nabla \phi_\omega) dx &- \int_{\partial B_r} |\nabla \phi_\omega|^2 (x\cdot \n_1) dS \\
		&- \int_{\partial B_R} |\nabla \phi_\omega|^2 (x\cdot \n_2) dS,
		\end{align*} 
		where $\n_1= -\frac{x}{r}$ is the unit inward normal at $x \in \partial B_r$ and $\n_2 = \frac{x}{R}$ is the unit outward normal at $x \in \partial B_R$. We also have
		\begin{align*}
		\int_{P(r,R)} \nabla \phi_\omega \cdot \nabla(x\cdot \nabla \phi_\omega) dx = \left(1-\frac{d}{2}\right) \int_{P(r,R)} |\nabla \phi_\omega|^2 dx &+ \frac{1}{2} \int_{\partial B_r} |\nabla \phi_\omega|^2 (x\cdot \n_1)dS \\
		&+ \frac{1}{2} \int_{\partial B_R} |\nabla \phi_\omega|^2 (x\cdot \n_2)dS.
		\end{align*}
		Thus,
		\begin{align*}
		-\int_{P(r,R)} \Delta \phi_\omega (x\cdot \nabla \phi_\omega) dx = \left(1-\frac{d}{2}\right) \int_{P(r,R)} |\nabla \phi_\omega|^2 dx &- \frac{1}{2} \int_{\partial B_r} |\nabla \phi_\omega|^2 (x\cdot \n_1) dS \\
		&-\frac{1}{2} \int_{\partial B_R} |\nabla \phi_\omega|^2 (x\cdot \n_2) dS.
		\end{align*}
		Similarly,
		\begin{align*}
		\omega\int_{P(r,R)} \phi_\omega(x\cdot \nabla \phi_\omega) dx = -\frac{d\omega}{2} \int_{P(r,R)} |\phi_\omega|^2 dx &+ \frac{\omega}{2} \int_{\partial B_r}|\phi_\omega|^2 (x\cdot \n_1) dS \\
		&+\frac{\omega}{2} \int_{\partial B_R}|\phi_\omega|^2 (x\cdot \n_2) dS,
		\end{align*}
		and
		\begin{align*}
		-c \int_{P(r,R)} |x|^{-2} \phi_\omega(x\cdot \nabla \phi_\omega) dx &= -c\left(1-\frac{d}{2}\right) \int_{P(r,R)} |x|^{-2} |\phi_\omega|^2 dx \\
		&\mathrel{\phantom{=}}-\frac{c}{2} \int_{\partial B_r} |x|^{-2} |\phi_\omega|^2 (x\cdot \n_1) dS \\
		&\mathrel{\phantom{=}}-\frac{c}{2} \int_{\partial B_R} |x|^{-2} |\phi_\omega|^2 (x\cdot \n_2) dS,
		\end{align*}
		and finally
		\begin{align*}
		-\int_{P(r,R)} |\phi_\omega|^\alpha \phi_\omega (x \cdot \nabla \phi_\omega) dx &= \frac{d}{\alpha+2} \int_{P(r,R)} |\phi_\omega|^{\alpha+2} dx \\
		&\mathrel{\phantom{=}}-\frac{1}{\alpha+2} \int_{\partial B_r} |\phi_\omega|^{\alpha+2} (x\cdot \n_1) dS \\
		&\mathrel{\phantom{=}}-\frac{1}{\alpha+2} \int_{\partial B_R} |\phi_\omega|^{\alpha+2} (x\cdot \n_2) dS.
		\end{align*}
		Adding the above identities, we get
		\begin{multline}
		\left(1-\frac{d}{2}\right) \left[\int_{P(r,R)} |\nabla \phi_\omega|^2 dx - c \int_{P(r,R)} |x|^{-2} |\phi_\omega|^2 dx\right] -\frac{d\omega}{2} \int_{P(r,R)}|\phi_\omega|^2 dx \\
		+ \frac{d}{\alpha+2} \int_{P(r,R)} |\phi_\omega|^{\alpha+2} dx = I_1(r) + I_2(R), \label{pohozaev proof}
		\end{multline}
		where
		\begin{align*}
		I_1(r) &= \frac{1}{2}\int_{\partial B_r} |\nabla \phi_\omega|^2 (x\cdot \n_1) dS - \frac{\omega}{2} \int_{\partial B_r} |\phi_\omega|^2 (x\cdot\n_1) dS  \\
		&\mathrel{\phantom{=}} +\frac{c}{2}\int_{\partial B_r} |x|^{-2} |\phi_\omega|^2 (x\cdot \n_1) dS +\frac{1}{\alpha+2} \int_{\partial B_r} |\phi_\omega|^{\alpha+2} (x\cdot \n_1) dS \\
		&=-r\left( \int_{\partial B_r} \frac{1}{2} |\nabla \phi_\omega|^2 -\frac{\omega}{2}|\phi_\omega|^2 + \frac{c}{2} |x|^{-2} |\phi_\omega|^2 +\frac{1}{\alpha+2} |\phi_\omega|^{\alpha+2} dS \right),
		\end{align*}
		and 
		\begin{align*}
		I_2(R) &= \frac{1}{2}\int_{\partial B_R} |\nabla \phi_\omega|^2 (x\cdot \n_2) dS - \frac{\omega}{2} \int_{\partial B_R} |\phi_\omega|^2 (x\cdot\n_2) dS  \\
		&\mathrel{\phantom{=}} +\frac{c}{2}\int_{\partial B_R} |x|^{-2} |\phi_\omega|^2 (x\cdot \n_2) dS +\frac{1}{\alpha+2} \int_{\partial B_R} |\phi_\omega|^{\alpha+2} (x\cdot \n_2) dS \\
		&=R\left( \int_{\partial B_R} \frac{1}{2} |\nabla \phi_\omega|^2 -\frac{\omega}{2}|\phi_\omega|^2 + \frac{c}{2} |x|^{-2} |\phi_\omega|^2 +\frac{1}{\alpha+2} |\phi_\omega|^{\alpha+2} dS \right).
		\end{align*}
		Denote 
		\[
		A(\phi_\omega) = \frac{1}{2} |\nabla \phi_\omega|^2 -\frac{\omega}{2}|\phi_\omega|^2 + \frac{c}{2} |x|^{-2} |\phi_\omega|^2 +\frac{1}{\alpha+2} |\phi_\omega|^{\alpha+2}.
		\]
		We have
		\begin{align}
		\int_{B} A(\phi_\omega) dx = \int_0^1 \int_{\partial B_r} A(\phi_\omega) dS dr <\infty, \label{finite term}
		\end{align}
		where $B$ is the unit ball in $\R^d$. Hence, there exists a sequence $r_n \rightarrow 0$ such that 
		\[
		r_n \int_{\partial B_{r_n}} A(\phi_\omega) dS \rightarrow 0 \quad \text{as } n\rightarrow \infty.
		\]
		Indeed, if 
		\[
		\liminf_{r\rightarrow 0} r \int_{\partial B_r} A(\phi_\omega) dS = c >0, 
		\]
		then 
		\[
		\int_{\partial B_r} A(\phi_\omega) dS
		\]
		would not be in $L^1(0,1)$, which contradicts to $(\ref{finite term})$. On the other hand, since
		\[
		\int_{\R^d} A(\phi_\omega) dx = \int_0^{+\infty} \int_{\partial B_R} A(\phi_\omega) dS dR <\infty,
		\]
		there exists a sequence $R_n \rightarrow +\infty$ such that 
		\[
		R_n\int_{\partial B_R} A(\phi_\omega) dS \rightarrow 0 \quad \text{as } n\rightarrow \infty.
		\]
		This implies that $I_1(r_n) \rightarrow 0$ and $I_2(R_n) \rightarrow 0$ as $n\rightarrow \infty$. Now substituting $r$ by $r_n$ and $R$ by $R_n$ in $(\ref{pohozaev proof})$ and taking $n\rightarrow \infty$, we obtain the second identity. The proof is complete.
	\end{proof}

	Throughout this section, we denote the functional
	\[
	Q(v):= \|v\|^2_{\dot{H}^1_c} -\frac{d\alpha}{2(\alpha+2)} \|v\|^{\alpha+2}_{L^{\alpha+2}}.
	\]
	Note that if we take 
	\begin{align}
	v^\lambda(x):= \lambda^{\frac{d}{2}} v(\lambda x), \label{scaling}   
	\end{align} 
	then we have
	\begin{align}
	\begin{aligned}
	\|v^\lambda\|_{L^2} &= \|v\|_{L^2}, & \|\nabla v^\lambda\|_{L^2} &= \lambda \|\nabla v\|_{L^2}, \\
	\||x|^{-1} v^\lambda \|_{L^2} &= \lambda \||x|^{-1} v\|_{L^2}, & \|v^\lambda\|_{L^{\alpha+2}} &= \lambda^{\frac{d\alpha}{2(\alpha+2)}} \|v\|_{L^{\alpha+2}}.
	\end{aligned}
	\label{scaling examples}
	\end{align}
	Thus,
	\[
	S_\omega(v^\lambda) = \frac{\lambda^2}{2} \|v\|^2_{\dot{H}^1_c} + \frac{\omega}{2} \|v\|^2_{L^2} - \frac{\lambda^{\frac{d\alpha}{2}}}{\alpha+2} \|v\|^{\alpha+2}_{L^{\alpha+2}},
	\]
	and
	\[
	Q(v) = \left. \partial_\lambda S_\omega(v^\lambda)\right|_{\lambda=1}.
	\]
	\begin{lemma} \label{lemma S_ome phi_ome}
		Let $d\geq 3, c \ne 0$ be such that $c<\lambda(d)$, $\frac{4}{d}<\alpha < \frac{4}{d-2}$ and $\omega>0$. Let $\phi_\omega \in \mathcal{G}_{\emph{rad},\omega}$. Then 
		\[
		S_\omega(\phi_\omega) = \inf \{S_\omega(v) \ : \ v \in H^1_{\text{rad}} \backslash \{0\}, Q(v)=0 \}.
		\]
	\end{lemma}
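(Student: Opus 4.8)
The plan is to prove the two inequalities separately. Write $m:=\inf\{S_\omega(v): v\in H^1_{\text{rad}}\setminus\{0\},\ Q(v)=0\}$ for the right-hand side, and recall from Proposition \ref{proposition existence radial ground states} that $\phi_\omega\in\Gc_{\text{rad},\omega}$ satisfies $S_\omega(\phi_\omega)=d(\text{rad},\omega)$ and $K_\omega(\phi_\omega)=0$.

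First I would show $m\le S_\omega(\phi_\omega)$ by checking that $\phi_\omega$ is admissible for $m$, i.e. that $Q(\phi_\omega)=0$. Since $\phi_\omega$ solves \eqref{elliptic equation}, both Pohozaev identities of Lemma \ref{lemma pohozaev identities} hold. Eliminating the mass term $\omega\|\phi_\omega\|^2_{L^2}$ between the first identity $\|\phi_\omega\|^2_{\dot{H}^1_c}+\omega\|\phi_\omega\|^2_{L^2}=\|\phi_\omega\|^{\alpha+2}_{L^{\alpha+2}}$ and the second one yields $\|\phi_\omega\|^2_{\dot{H}^1_c}=\tfrac{d\alpha}{2(\alpha+2)}\|\phi_\omega\|^{\alpha+2}_{L^{\alpha+2}}$, which is exactly $Q(\phi_\omega)=0$. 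As $\phi_\omega\in H^1_{\text{rad}}\setminus\{0\}$, this gives $m\le S_\omega(\phi_\omega)$.

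For the reverse inequality $S_\omega(\phi_\omega)\le m$, I would fix an arbitrary admissible $v$ (radial, nonzero, $Q(v)=0$) and slide it onto the Nehari constraint using the $L^2$-invariant dilation $v^\lambda$ of \eqref{scaling}, which is again radial. Set $h(\lambda):=S_\omega(v^\lambda)$; by the scaling relations \eqref{scaling examples},
\[
h(\lambda)=\frac{\lambda^2}{2}\|v\|^2_{\dot{H}^1_c}+\frac{\omega}{2}\|v\|^2_{L^2}-\frac{\lambda^{\frac{d\alpha}{2}}}{\alpha+2}\|v\|^{\alpha+2}_{L^{\alpha+2}},
\qquad
h'(\lambda)=\frac{1}{\lambda}Q(v^\lambda).
\]
Because $\alpha>\tfrac4d$ forces $\tfrac{d\alpha}{2}>2$, the function $h$ is strictly increasing and then strictly decreasing, so it attains its unique maximum at the value of $\lambda$ where $Q(v^\lambda)=0$; since $Q(v)=0$ this value is $\lambda=1$. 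Meanwhile $K_\omega(v^\lambda)=\lambda^2\|v\|^2_{\dot{H}^1_c}+\omega\|v\|^2_{L^2}-\lambda^{\frac{d\alpha}{2}}\|v\|^{\alpha+2}_{L^{\alpha+2}}$ is positive for small $\lambda>0$ (using $\omega>0$ and $\tfrac{d\alpha}{2}>2$) and tends to $-\infty$ as $\lambda\to\infty$, so by continuity there is $\lambda_*>0$ with $K_\omega(v^{\lambda_*})=0$. Then $v^{\lambda_*}\in H^1_{\text{rad}}\setminus\{0\}$ lies on the Nehari constraint, whence $S_\omega(v^{\lambda_*})\ge d(\text{rad},\omega)$ by definition \eqref{minimizing problem}, while maximality of $\lambda=1$ gives $S_\omega(v)=h(1)\ge h(\lambda_*)=S_\omega(v^{\lambda_*})$. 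Combining, $S_\omega(v)\ge d(\text{rad},\omega)=S_\omega(\phi_\omega)$, and taking the infimum over $v$ yields $m\ge S_\omega(\phi_\omega)$.

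The crux is the comparison in the second step. One is tempted to use the Nehari rescaling $\mu\mapsto\mu v$ to reach $\{K_\omega=0\}$, but along that ray $S_\omega$ is itself maximized at the constraint, so it pushes in the wrong direction. The correct device is the mass-preserving dilation $v^\lambda$, whose associated functional is precisely $Q$: when $Q(v)=0$ the point $\lambda=1$ is the maximum of $h$, and this is exactly what lets us descend to the Nehari manifold without increasing the action. The only points requiring care are the scaling identities \eqref{scaling examples}, the monotonicity of $h$, and the sign analysis guaranteeing the existence of $\lambda_*$, where the hypotheses $\omega>0$ and $\alpha>\tfrac4d$ enter.
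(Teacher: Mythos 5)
Your proof is correct and takes essentially the same route as the paper's: both deduce $Q(\phi_\omega)=0$ from the Pohozaev identities to get one inequality, and for the other both use the mass-preserving dilation $v^\lambda$, the sign change of $\lambda \mapsto K_\omega(v^\lambda)$ between $\lambda\to 0$ and $\lambda\to\infty$ to find $\lambda_*$ with $K_\omega(v^{\lambda_*})=0$, and the fact that $Q(v)=0$ makes $\lambda=1$ the maximizer of $\lambda\mapsto S_\omega(v^\lambda)$, so that $S_\omega(v)\geq S_\omega(v^{\lambda_*})\geq S_\omega(\phi_\omega)$. The only cosmetic difference is that the paper splits off the case $K_\omega(v)=0$ separately, whereas your argument absorbs it uniformly (there $\lambda_*=1$ works).
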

	\begin{proof}
		Let $d_n:= \inf \{S_\omega(v) \ : \ v \in H^1_{\text{rad}} \backslash \{0\}, Q(v)=0 \}$. Thanks to the Pohozaev's identities, it is easy to check that $S_\omega(\phi_\omega)= Q(\phi_\omega)=0$. By the definition of $d_n$,
		\begin{align}
		S_\omega(\phi_\omega) \geq d_n. \label{inequality 1}
		\end{align}
		We now consider $v \in H^1_{\text{rad}} \backslash \{0\}$ be such that $Q(v)=0$. If $K_\omega(v)=0$, then by Proposition $\ref{proposition existence radial ground states}$, $S_\omega(v) \geq S_\omega(\phi_\omega)$. Assume that $K_\omega(v) \ne 0$. Let $v^\lambda$ be as in $(\ref{scaling})$. We have
		\[
		K_\omega(v^\lambda)=\lambda^2 \|v\|^2_{\dot{H}^1_c} + \omega \|v\|^2_{L^2} - \lambda^{\frac{d\alpha}{2}} \|v\|^{\alpha+2}_{L^{\alpha+2}}.
		\]
		We see that $\lim_{\lambda\rightarrow 0} K_\omega(v^\lambda)= \omega \|v\|^2_{L^2}>0$. Since $\frac{d\alpha}{2}>2$, we have $\lim_{\lambda \rightarrow +\infty} K_\omega(v^\lambda) = -\infty$. Thus, there exists $\lambda_0 >0$ such that $K_\omega(v^{\lambda_0})=0$. By Proposition $\ref{proposition existence radial ground states}$, we get $S_\omega(v^{\lambda_0}) \geq S_\omega(\phi_\omega)$. On the other hand, a direct computation shows that
		\begin{align*}
		\partial_\lambda S_\omega(v^\lambda)&= \lambda \|v\|^2_{\dot{H}^1_c} -\frac{d\alpha}{2(\alpha+2)} \lambda^{\frac{d\alpha}{2}-1} \|v\|^{\alpha+2}_{L^{\alpha+2}} \\
		&= \lambda \left( \|v\|^2_{\dot{H}^1_c}- \frac{d\alpha}{2(\alpha+2)} \lambda^{\frac{d\alpha}{2}-2} \|v\|^{\alpha+2}_{L^{\alpha+2}} \right).
		\end{align*}
		The equation $\partial_\lambda S_\omega(v^\lambda)= 0$ admits a unique non-zero solution 
		\[
		\lambda_1 = \left( \frac{\|u\|^2_{\dot{H}^1_c}}{\frac{d\alpha}{2(\alpha+2)} \|v\|^{\alpha+2}_{L^{\alpha+2}} } \right)^{\frac{2}{d\alpha-4}}
		\]
		which is equal to 1 since $Q(v)=0$. It follows that $\partial_\lambda S_\omega(v^\lambda)>0$ if $\lambda \in (0,1)$ and $\partial_\lambda S_\omega(v^\lambda)<0$ if $\lambda \in (1,\infty)$. In particular, we get $S_\omega(v^\lambda) < S_\omega(v)$ for any $\lambda>0$ and $\lambda \ne 1$. Since $\lambda_0>0$, it follows that $S_\omega(v^{\lambda_0}) \leq S_\omega(v)$. This implies that $S_\omega(v) \geq S_\omega(\phi_\omega)$ for any $v \in H^1_{\text{rad}} \backslash \{0\}$, $Q(v)=0$. Taking the infimum, we obtain
		\begin{align}
		S_\omega(\phi_\omega) \leq d_n. \label{inequality 2}
		\end{align}
		Combining $(\ref{inequality 1})$ and $(\ref{inequality 2})$, we prove the result.
	\end{proof}
	
	Let $\phi_\omega \in \mathcal{G}_{\text{rad},\omega}$. We denote
	\[
	\mathcal{B}_{\text{rad},\omega} := \{ v \in H^1_{\text{rad}} \backslash \{0\} \ : \ S_\omega(v) < S_\omega(\phi_\omega), Q(v) <0 \}.
	\]
	\begin{lemma} 
		Let $d\geq 3, c \ne 0$ be such that $c<\lambda(d)$, $\frac{4}{d}<\alpha <\frac{4}{d-2}$ and $\omega>0$. Let $\phi_\omega \in \mathcal{G}_{\text{rad},\omega}$. Then $\mathcal{B}_{\text{rad},\omega}$ is invariant under the flow of $(\ref{inverse square NLS})$, that is, if $u_0 \in \mathcal{B}_{\text{rad},\omega}$, then the corresponding solution $u(t)$ to $(\ref{inverse square NLS})$ with $u(0) = u_0$ satisfies $u(t) \in \mathcal{B}_{\text{rad},\omega}$ for any $t\in [0,T)$.
	\end{lemma}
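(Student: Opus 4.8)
The plan is to verify the three defining properties of $\mathcal{B}_{\text{rad},\omega}$ separately along the flow: membership in $H^1_{\text{rad}}\setminus\{0\}$, the strict action inequality $S_\omega(u(t)) < S_\omega(\phi_\omega)$, and the sign condition $Q(u(t)) < 0$. The first two follow directly from symmetry and the conservation laws in Theorem \ref{theorem local theory}, while the last is the only genuinely nontrivial point and will be handled by a continuity argument.

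First I would observe that radial symmetry is preserved by the flow: since equation \eqref{inverse square NLS} is invariant under rotations and the local solution is unique (Theorem \ref{theorem local theory}), a radial initial datum produces a radial solution, so $u(t) \in H^1_{\text{rad}}$ for all $t \in [0,T)$. Next, because $S_\omega(v) = E(v) + \frac{\omega}{2}\|v\|^2_{L^2}$ and both the energy $E$ and the mass $\|\cdot\|^2_{L^2}$ are conserved, the action is conserved along the flow; hence $S_\omega(u(t)) = S_\omega(u_0) < S_\omega(\phi_\omega)$ for every $t$. Mass conservation also gives $\|u(t)\|_{L^2} = \|u_0\|_{L^2} > 0$, so $u(t) \neq 0$, which keeps $u(t)$ in the admissible class $H^1_{\text{rad}}\setminus\{0\}$.

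The heart of the argument is to show $Q(u(t)) < 0$ for all $t \in [0,T)$. I would argue by contradiction. Since $u \in C([0,T), H^1)$ and $Q$ is continuous on $H^1$, the map $t \mapsto Q(u(t))$ is continuous and starts strictly negative at $t=0$; if it ever became nonnegative there would be a first time $t_0 \in (0,T)$ with $Q(u(t_0)) = 0$. At that time $u(t_0) \in H^1_{\text{rad}}\setminus\{0\}$ satisfies $Q(u(t_0)) = 0$, so the variational characterization of Lemma \ref{lemma S_ome phi_ome} would force $S_\omega(u(t_0)) \geq S_\omega(\phi_\omega)$. This contradicts the conserved strict inequality $S_\omega(u(t_0)) = S_\omega(u_0) < S_\omega(\phi_\omega)$ established above. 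Hence $Q(u(t))$ can never reach $0$ and remains strictly negative on $[0,T)$, completing the proof that $u(t) \in \mathcal{B}_{\text{rad},\omega}$.

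The main obstacle is precisely this last step, and it hinges on Lemma \ref{lemma S_ome phi_ome}: the fact that $S_\omega(\phi_\omega)$ is exactly the minimal action on the constraint surface $\{Q=0\}$ is what turns that surface into an impenetrable barrier for trajectories starting below the ground-state action level. The only auxiliary point to check carefully is the continuity of $Q$ as a functional on $H^1$, which follows from the continuity of $v \mapsto \|v\|^2_{\dot{H}^1_c}$ (equivalent to the squared $H^1$ norm by \eqref{equivalent norms}) and of $v \mapsto \|v\|^{\alpha+2}_{L^{\alpha+2}}$ via the Sobolev embedding, both valid in the subcritical range $\frac{4}{d} < \alpha < \frac{4}{d-2}$.
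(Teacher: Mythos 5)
Your proof is correct and follows essentially the same route as the paper: conservation of mass and energy gives $S_\omega(u(t)) = S_\omega(u_0) < S_\omega(\phi_\omega)$, and the sign condition $Q(u(t))<0$ is preserved by the same continuity/contradiction argument invoking Lemma \ref{lemma S_ome phi_ome} as a barrier on the surface $\{Q=0\}$. Your additional remarks on preservation of radial symmetry and nonvanishing of $u(t)$ (needed so that $u(t)$ stays in the admissible class $H^1_{\text{rad}}\setminus\{0\}$ where that lemma applies) are details the paper leaves implicit, and they are welcome.
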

	\begin{proof}
		Let $u_0 \in \mathcal{B}_{\text{rad},\omega}$. By the conservation of mass and energy,
		\begin{align}
		S_\omega(u(t)) = S_\omega(u_0) < S_\omega(\phi_\omega), \quad \forall t\in [0,T). \label{invariant property}
		\end{align}
		It remains to show that $Q(u(t))<0$ for any $t\in [0,T)$. Suppose that there exists $t_0 \in [0,T)$ such that $Q(u(t_0)) \geq 0$. By the continuity of $t\mapsto Q(u(t))$, there exists $t_1 \in (0, t_0]$ such that $Q(u(t_1))=0$. By Lemma $\ref{lemma S_ome phi_ome}$, $S_\omega(u(t_1)) \geq S_\omega(\phi_\omega)$ which contradicts to $(\ref{invariant property})$. 
	\end{proof}

	\begin{lemma} \label{lemma key estimate}
		Let $d\geq 3, c\ne 0$ be such that $c<\lambda(d)$, $\frac{4}{d}<\alpha<\frac{4}{d-2}$ and $\omega>0$. Let $\phi_\omega \in \mathcal{G}_{\text{rad},\omega}$. If $v \in \mathcal{B}_{\text{rad},\omega}$, then 
		\[
		Q(v) \leq 2 ( S_\omega(v) - S_\omega(\phi_\omega)).
		\]
	\end{lemma}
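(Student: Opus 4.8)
The plan is to run a scaling argument on $v^\lambda(x)=\lambda^{d/2}v(\lambda x)$ from \eqref{scaling}, feeding the result into the variational characterization of $S_\omega(\phi_\omega)$ furnished by Lemma \ref{lemma S_ome phi_ome}. First I would record the explicit expression $y(\lambda):=S_\omega(v^\lambda)=\tfrac{\lambda^2}{2}\|v\|^2_{\dot H^1_c}+\tfrac{\omega}{2}\|v\|^2_{L^2}-\tfrac{\lambda^{d\alpha/2}}{\alpha+2}\|v\|^{\alpha+2}_{L^{\alpha+2}}$ already displayed before the statement, together with the two bookkeeping identities $y'(1)=Q(v)$ and $Q(v^\lambda)=\lambda\,y'(\lambda)$, both of which follow by differentiating $y$ and comparing with the definition of $Q$.

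The algebraic heart of the argument is the auxiliary quantity $\psi(\lambda):=S_\omega(v^\lambda)-\tfrac12 Q(v^\lambda)=y(\lambda)-\tfrac12\lambda\,y'(\lambda)$. A direct computation shows that the $\dot H^1_c$-term cancels identically, leaving $\psi(\lambda)=\tfrac{\omega}{2}\|v\|^2_{L^2}+\tfrac{d\alpha-4}{4(\alpha+2)}\lambda^{d\alpha/2}\|v\|^{\alpha+2}_{L^{\alpha+2}}$. Since $\alpha>\tfrac4d$ forces $d\alpha-4>0$, the map $\lambda\mapsto\psi(\lambda)$ is strictly increasing on $(0,\infty)$. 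Evaluating at $\lambda=1$ gives exactly $\psi(1)=S_\omega(v)-\tfrac12 Q(v)$, so the asserted inequality $Q(v)\le 2(S_\omega(v)-S_\omega(\phi_\omega))$ is equivalent to $\psi(1)\ge S_\omega(\phi_\omega)$.

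To obtain the latter I would locate a rescaling sitting on the constraint $\{Q=0\}$. Writing $y'(\lambda)=\lambda\bigl(\|v\|^2_{\dot H^1_c}-\tfrac{d\alpha}{2(\alpha+2)}\lambda^{\frac{d\alpha}{2}-2}\|v\|^{\alpha+2}_{L^{\alpha+2}}\bigr)$ and using that $c<\lambda(d)$ makes $\|v\|^2_{\dot H^1_c}>0$ for $v\neq0$ through the sharp Hardy inequality \eqref{equivalent norms}, the bracketed factor is strictly decreasing (because $\tfrac{d\alpha}{2}-2>0$), tends to $\|v\|^2_{\dot H^1_c}>0$ as $\lambda\to0^+$, and equals $Q(v)<0$ at $\lambda=1$ since $v\in\mathcal B_{\text{rad},\omega}$. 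Hence there is a unique $\lambda^*\in(0,1)$ with $y'(\lambda^*)=0$, that is $Q(v^{\lambda^*})=\lambda^* y'(\lambda^*)=0$; moreover $v^{\lambda^*}\in H^1_{\text{rad}}\setminus\{0\}$, so Lemma \ref{lemma S_ome phi_ome} gives $S_\omega(v^{\lambda^*})\ge S_\omega(\phi_\omega)$, while at this point $\psi(\lambda^*)=S_\omega(v^{\lambda^*})-\tfrac12 Q(v^{\lambda^*})=S_\omega(v^{\lambda^*})$.

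Assembling the pieces, the monotonicity of $\psi$ together with $\lambda^*<1$ yields $\psi(1)\ge\psi(\lambda^*)=S_\omega(v^{\lambda^*})\ge S_\omega(\phi_\omega)$, which is precisely $\tfrac12 Q(v)\le S_\omega(v)-S_\omega(\phi_\omega)$. The only step that genuinely requires care is the existence and uniqueness of the critical scaling $\lambda^*\in(0,1)$: this hinges on the strict positivity of $\|v\|^2_{\dot H^1_c}$ (guaranteed by $c<\lambda(d)$) and on the $L^2$-supercritical condition $\tfrac{d\alpha}{2}-2>0$, both of which are standing hypotheses, so I expect no essential obstacle beyond these routine verifications.
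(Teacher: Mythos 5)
Your proof is correct and is essentially the paper's own argument in integrated form: the paper proves the differential inequality $(\lambda g'(\lambda))' \le 2g'(\lambda)$ for $g(\lambda)=S_\omega(v^\lambda)$ and integrates it from the unique critical scale $\lambda_0\in(0,1)$ (where $Q(v^{\lambda_0})=0$) to $1$, then applies Lemma~\ref{lemma S_ome phi_ome} --- which is precisely the statement that your $\psi(\lambda)=S_\omega(v^\lambda)-\tfrac{1}{2}Q(v^\lambda)$ is nondecreasing, used between $\lambda^*$ and $1$. Your explicit formula for $\psi$, exhibiting the cancellation of the $\dot{H}^1_c$-terms, is a slightly more transparent packaging of the same idea, with the same key ingredients: the scaling family, the critical point in $(0,1)$ on the constraint $Q=0$, and Lemma~\ref{lemma S_ome phi_ome}.
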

	\begin{proof}
		Let $v^\lambda$ be as in $(\ref{scaling})$. Set $g(\lambda):= S_\omega(v^\lambda)$. We have
		\begin{align*}
		g(\lambda) &= \frac{\lambda^2}{2} \|v\|^2_{\dot{H}^1_c} + \frac{\omega}{2} \|v\|^2_{L^2} - \frac{\lambda^{\frac{d\alpha}{2}}}{\alpha+2} \|v\|^{\alpha+2}_{L^{\alpha+2}}, \\
		g'(\lambda)&= \lambda \|v\|^2_{\dot{H}^1_c} -\frac{d\alpha}{2(\alpha+2)} \lambda^{\frac{d\alpha}{2}-1} \|v\|^{\alpha+2}_{L^{\alpha+2}} = \frac{Q(v^\lambda)}{\lambda}, \\
		\end{align*}
		and
		\begin{align*}
		(\lambda g'(\lambda))' &= 2 \lambda \|v\|^2_{\dot{H}^1_c} - \frac{d^2\alpha^2}{4(\alpha+2)} \lambda^{\frac{d\alpha}{2}-1} \|v\|^{\alpha+2}_{L^{\alpha+2}} \\
		&= 2 \left( \lambda \|v\|^2_{\dot{H}^1_c} - \frac{d\alpha}{2(\alpha+2)} \lambda^{\frac{d\alpha}{2}-1} \|v\|^{\alpha+2}_{L^{\alpha+2}} \right) - \frac{d\alpha (d\alpha-4)}{4(\alpha+2)} \lambda^{\frac{d\alpha}{2}-1} \|v\|^{\alpha+2}_{L^{\alpha+2}} \\
		&= 2 g'(\lambda) - \frac{d\alpha (d\alpha-4)}{4(\alpha+2)} \lambda^{\frac{d\alpha}{2}-1} \|v\|^{\alpha+2}_{L^{\alpha+2}}.
		\end{align*}
		Since $d\alpha>4$, we see that
		\begin{align}
		(\lambda g'(\lambda))' \leq 2 g'(\lambda), \quad \forall \lambda>0. \label{integration}
		\end{align}
		Since $Q(v) <0$, the equation $\partial_\lambda S_\omega(v^\lambda) =0$ admits a unique non-zero solution $\lambda_0 \in (0,1)$. Taking the integration over $\lambda_0$ and 1 and note that $Q(v^{\lambda_0}) = \lambda_0 \left.\left( \partial_\lambda S_\omega(v^\lambda) \right)\right|_{\lambda =\lambda_0} =0$, we get
		\[
		Q(v) - Q(v^{\lambda_0}) \leq 2 (S_\omega(v) - S_\omega(v^{\lambda_0})) \leq 2 (S_\omega(v) - S_\omega(\phi_\omega)).
		\]
		Here, the last inequality comes from the fact $Q(v^{\lambda_0})=0$. The proof is complete.
	\end{proof}

	The key ingredient in showing the strong instability of radial standing waves is to use localized virial estimates to establish the finite time blowup. Let us recall localized virial estimates related to $(\ref{inverse square NLS})$. Let $\theta: [0,\infty) \rightarrow [0,\infty)$ be such that
	\[
	\theta(r) = \left\{ 
	\begin{array}{cl}
	r^2 &\text{if } 0\leq r\leq 1, \\
	\text{const.} &\text{if } r \geq 2,
	\end{array}
	\right.
	\quad \text{and} \quad \theta''(r) \leq 2 \text{ for } r\geq 0.
	\]
	The precise constant here is not important. For $R>1$, we define the radial function 
	\begin{align}
	\varphi_R(x) = \varphi_R(r) := R^2 \theta(r/R), \quad r=|x|. \label{define varphi_R}
	\end{align}
	We define the virial potential by
	\begin{align}
	V_{\varphi_R}(t) := \int \varphi_R(x) |u(t,x)|^2 dx. \label{define virial potential}
	\end{align}
	\begin{lemma}[Radial virial estimate \cite{Dinh-inverse}] \label{lemma radial virial estimate}
		Let $d\geq 3$, $c \ne 0$ be such that $c<\lambda(d)$, $\frac{4}{d} <\alpha<\frac{4}{d-2}$, $R>1$ and $\varphi_R$ be as in $(\ref{define varphi_R})$. Let $u: I\times \R^d \rightarrow \C$ be a radial solution to $(\ref{inverse square NLS})$. Then for any $t \in I$, 
		\begin{align}
		\frac{d^2}{dt^2} V_{\varphi_R}(t) &\leq 8 \|u(t)\|^2_{\dot{H}^1_c} - \frac{4d\alpha}{\alpha+2} \|u(t)\|^{\alpha+2}_{L^{\alpha+2}} + O \left( R^{-2} + R^{-\frac{(d-1)\alpha}{2}} \|u(t)\|^{\frac{\alpha}{2}}_{\dot{H}^1_c} \right) \label{radial virial estimate 1} \\
		&= 8 Q(u(t)) + O \left( R^{-2} + R^{-\frac{(d-1)\alpha}{2}} \|u(t)\|^{\frac{\alpha}{2}}_{\dot{H}^1_c} \right)  \label{radial virial estimate 2} \\
		&=4d\alpha E(u(t)) - 2(d\alpha-4) \|u(t)\|^2_{\dot{H}^1_c} + O \left( R^{-2} + R^{-\frac{(d-1)\alpha}{2}} \|u(t)\|^{\frac{\alpha}{2}}_{\dot{H}^1_c} \right). \label{radial virial estimate 3}
		\end{align}
		The implicit constant depends only on $\|u_0\|_{L^2}, d$ and $\alpha$. Here $A=O(B)$ means there exists a constant $C>0$ such that $A=CB$. 
	\end{lemma}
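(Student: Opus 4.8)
The plan is to derive a general second-order virial identity for an arbitrary smooth radial weight and then specialize to $\varphi_R$, treating $\{|x|\le R\}$, where $\varphi_R(x)=|x|^2$, as the region producing the exact virial quantity and $\{|x|>R\}$ as producing controllable errors. Writing the equation as $\partial_t u = i(\Delta u + c|x|^{-2}u + |u|^\alpha u)$ and using the continuity identity $\partial_t|u|^2 = -2\nabla\cdot\mathrm{Im}(\bar u\nabla u)$, an integration by parts gives
\[
\frac{d}{dt}V_{\varphi_R}(t) = 2\,\mathrm{Im}\int \nabla\varphi_R\cdot\bar u\,\nabla u\,dx.
\]
Differentiating once more and inserting the equation again yields, by the standard Morawetz-type computation, for a general radial $\varphi$,
\begin{align*}
\frac{d^2}{dt^2}V_{\varphi}(t) ={}& 4\int \varphi''|\partial_r u|^2\,dx - \int \Delta^2\varphi\,|u|^2\,dx \\
&- \frac{2\alpha}{\alpha+2}\int \Delta\varphi\,|u|^{\alpha+2}\,dx - 4c\int \frac{\varphi'}{r}\,|x|^{-2}|u|^2\,dx,
\end{align*}
where I have used that $u$ is radial, so that the Hessian of $\varphi$ contracted against $\nabla u\otimes\overline{\nabla u}$ reduces to $\varphi''|\partial_r u|^2$, and that $x\cdot\nabla(|x|^{-2})=-2|x|^{-2}$, which is what makes the inverse-square term scale exactly like the kinetic one. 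Because $\varphi_R$ is bounded together with all its derivatives, $V_{\varphi_R}$ is finite for $H^1$ data and these manipulations are legitimate after a standard regularization of the solution (and of the potential near the origin), passing to the limit at the end; this is precisely the benefit of localizing rather than working with $|x|^2$ directly.

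Next I would calibrate with $\varphi=|x|^2$, for which $\varphi''=2$, $\Delta\varphi=2d$, $\Delta^2\varphi=0$ and $\varphi'/r=2$, so the four terms collapse to $8\|u\|^2_{\dot{H}^1_c}-\tfrac{4d\alpha}{\alpha+2}\|u\|^{\alpha+2}_{L^{\alpha+2}}=8Q(u)$. For $\varphi_R$ I would write each term as its $|x|^2$-value plus a remainder supported in $\{|x|>R\}$. The kinetic term obeys $4\int\varphi_R''|\partial_r u|^2\le 8\|\nabla u\|^2_{L^2}$ since $\theta''\le 2$; combining this with the exact potential value $-8c\||x|^{-1}u\|^2_{L^2}$ produces the leading term $8\|u\|^2_{\dot{H}^1_c}$. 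The potential remainder equals $4c\int_{|x|>R}\!\big(2-\tfrac{\varphi_R'}{r}\big)|x|^{-2}|u|^2\,dx$ and the biharmonic term is $-\int\Delta^2\varphi_R|u|^2\,dx$; both are $O(R^{-2})$, using $|x|^{-2}\le R^{-2}$ and $|\Delta^2\varphi_R|\lesssim R^{-2}$ on $\{|x|>R\}$, together with conservation of mass.

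The decisive estimate is the nonlinear remainder $\tfrac{2\alpha}{\alpha+2}\int_{|x|>R}(2d-\Delta\varphi_R)|u|^{\alpha+2}\,dx$, and this is where radiality is indispensable. By the radial Sobolev (Strauss) inequality $\sup_{|x|>R}|u(x)|\lesssim R^{-(d-1)/2}\|u\|^{1/2}_{L^2}\|\nabla u\|^{1/2}_{L^2}$, one gets
\[
\int_{|x|>R}|u|^{\alpha+2}\,dx \le \Big(\sup_{|x|>R}|u|\Big)^{\alpha}\|u\|^2_{L^2} \lesssim R^{-\frac{(d-1)\alpha}{2}}\|\nabla u\|^{\alpha/2}_{L^2},
\]
and since $c<\lambda(d)$ gives $\|\nabla u\|_{L^2}\lesssim\|u\|_{\dot{H}^1_c}$ via the sharp Hardy inequality, the nonlinear error is $O\big(R^{-(d-1)\alpha/2}\|u\|^{\alpha/2}_{\dot{H}^1_c}\big)$, with constants absorbing the conserved mass. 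Assembling the pieces gives \eqref{radial virial estimate 1}; identity \eqref{radial virial estimate 2} is just the definition of $Q$, and \eqref{radial virial estimate 3} follows by eliminating $\|u\|^{\alpha+2}_{L^{\alpha+2}}$ through $E(u)=\tfrac12\|u\|^2_{\dot{H}^1_c}-\tfrac1{\alpha+2}\|u\|^{\alpha+2}_{L^{\alpha+2}}$. I expect the main obstacle to be the rigorous justification of the virial identity at $H^1$ regularity in the presence of the inverse-square singularity, namely controlling the boundary and commutator terms under regularization, rather than the algebra of the remainder estimates, which is routine.
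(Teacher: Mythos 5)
Your proof is correct: the general localized virial identity, the reduction of the Hessian term to $\varphi''|\partial_r u|^2$ by radiality, the sign and $O(R^{-2})$ size of the potential and biharmonic remainders, and the Strauss-inequality treatment of the nonlinear tail (with mass conservation and the Hardy-based equivalence $\|\nabla u\|_{L^2}\lesssim \|u\|_{\dot{H}^1_c}$) all check out, and \eqref{radial virial estimate 2}--\eqref{radial virial estimate 3} are indeed just algebra from \eqref{radial virial estimate 1}. Note that the paper does not prove this lemma itself but defers to \cite[Lemma 5.4]{Dinh-inverse}; your argument is essentially the same localized-virial-plus-radial-Sobolev proof used there, so the two approaches coincide (modulo the standard regularization needed to justify the identity at $H^1$ regularity, which you correctly flag).
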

	We refer the reader to \cite[Lemma 5.4]{Dinh-inverse} for the proof of the above result. 
	
	We are now able to prove our main result.
	
	\noindent {\it Proof of Theorem $\ref{theorem instability}$.}
	Let $\eps>0$, $\omega>0$ and $\phi_\omega \in \Gc_{\text{rad},\omega}$. Since $\phi^\lambda_\omega \rightarrow \phi_\omega$ in $H^1$ as $\lambda \rightarrow 1$, there exists $\lambda_0>1$ such that $\|\phi_\omega - \phi^{\lambda_0}_\omega \|_{H^1} <\eps$. By decreasing $\lambda_0$ if necessary, we claim that $\phi^{\lambda_0}_\omega \in \mathcal{B}_{\text{rad},\omega}$. To see this, we first notice that $Q(\phi_\omega)=0$. This fact follows from the Pohozaev's identities related to $(\ref{elliptic equation})$ given in Lemma $\ref{lemma pohozaev identities}$:
	\begin{align}
	\omega \|\phi_\omega\|^2_{L^2} = \frac{4-(d-2)\alpha}{2(\alpha+2)} \|\phi_\omega\|^{\alpha+2}_{L^{\alpha+2}} = \frac{4-(d-2)\alpha}{d\alpha} \|\phi_\omega\|^2_{\dot{H}^1_c}. \label{pohozaev identities}
	\end{align}
	
	On the other hand, a direct computation shows
	\begin{align*}
	S_\omega(\phi^{\lambda}_\omega) &:= \frac{\lambda^2}{2} \|\phi_\omega\|^2_{\dot{H}^1_c} + \frac{\omega}{2} \|\phi_\omega\|^2_{L^2} - \frac{\lambda^{\frac{d\alpha}{2}}}{\alpha+2} \|\phi_\omega\|^{\alpha+2}_{L^{\alpha+2}}, \\
	\partial_\lambda S_\omega(\phi^\lambda_\omega) &:= \lambda \|\phi_\omega\|^2_{\dot{H}^1_c} - \frac{d\alpha}{2(\alpha+2)} \lambda^{\frac{d\alpha-2}{2}} \|\phi_\omega\|^{\alpha+2}_{L^{\alpha+2}} = \frac{Q(\phi^\lambda_\omega)}{\lambda}.
	\end{align*}
	It is easy to see that the equation $\partial_\lambda S_\omega(\phi^\lambda_\omega) =0$ has a unique non-zero solution 
	\[
	\left( \frac{\|\phi_\omega\|^2_{\dot{H}^1_c} }{ \frac{d\alpha}{2(\alpha+2)} \|\phi_\omega\|^{\alpha+2}_{L^{\alpha+2}} } \right)^{\frac{2}{d\alpha-4}} =1.
	\]
	The last inequality comes from the fact $Q(\phi_\omega) =0$. This implies in particular that
	\[
	\left\{
	\begin{array}{c l}
	\partial_\lambda S_\omega(\phi^\lambda_\omega)>0 &\text{if } \lambda \in (0,1), \\
	\partial_\lambda S_\omega(\phi^\lambda_\omega)<0 &\text{if } \lambda \in (1,\infty),
	\end{array}
	\right.
	\]
	from which we get $S_\omega(\phi^\lambda_\omega) < S_\omega(\phi_\omega)$ for any $\lambda>0, \lambda \ne 1$. Since $Q(\phi^\lambda_\omega) = \lambda  \partial_\lambda S_\omega (\phi^\lambda_\omega)$, we also have 
	\[
	\left\{
	\begin{array}{c l}
	Q(\phi^\lambda_\omega)>0 &\text{if } \lambda \in (0,1), \\
	Q(\phi^\lambda_\omega)<0 &\text{if } \lambda \in (1,\infty).
	\end{array}
	\right.
	\]
	As an application of the above argument, we have
	\[
	S_\omega(\phi^{\lambda_0}_\omega) <S_\omega(\phi_\omega), \quad Q(\phi^{\lambda_0}_\omega) <0.
	\]
	This shows that $\phi^{\lambda_0}_\omega \in \mathcal{B}_{\text{rad},\omega}$ and the claim follows. 
	
	By Theorem $\ref{theorem local theory}$, there exists a unique solution $u \in C([0,T),H^1)$ to $(\ref{inverse square NLS})$ with initial data $u(0)=u_0= \phi^{\lambda_0}_\omega$, where $T>0$ is the maximal existence time. Since $u_0= \phi^{\lambda_0}_\omega$ is radial, it is well-known that the corresponding solution is also radial. The rest of this note is to show that $u$ blows up in finite time. It is done by several steps.
	
	\noindent {\bf Step 1.} We claim that there exists $a>0$ such that $Q(u(t)) \leq -a$ for any $t \in [0,T)$. Indeed, since $\mathcal{B}_{\text{rad},\omega}$ is invariant under the flow of $(\ref{inverse square NLS})$, we see that $u(t) \in \mathcal{B}_{\text{rad},\omega}$ for any $t\in [0,T)$. By Lemma $\ref{lemma key estimate}$, we get
	\[
	Q(u(t)) \leq 2(S_\omega(u(t)) - S_\omega(\phi_\omega)) = 2 (S_\omega(\phi^{\lambda_0}_\omega) - S_\omega(\phi_\omega)).
	\]
	This proves the claim with $a= 2(S_\omega(\phi_\omega)- S_\omega(\phi^{\lambda_0}_\omega))>0$. 
	
	\noindent {\bf Step 2.} We next claim that there exists $b>0$ such that 
	\begin{align}
	\frac{d^2}{dt^2} V_{\varphi_R}(t) \leq -b, \label{virial potential bound}
	\end{align}
	for any $t \in [0,T)$, where $V_{\varphi_R}(t)$ is as in $(\ref{define virial potential})$. Indeed, since the solution $u(t)$ is radial, we apply Lemma $\ref{lemma radial virial estimate}$ to have
	\[
	\frac{d^2}{dt^2} V_{\varphi_R}(t) \leq 4d\alpha E(u(t)) - 2(d\alpha-4) \|u(t)\|^2_{\dot{H}^1_c} + O\left( R^{-2} + R^{-\frac{(d-1)\alpha}{2}} \|u(t)\|^{\frac{\alpha}{2}}_{\dot{H}^1_c} \right),
	\]
	for any $t\in [0,T)$ and any $R>1$. The Young inequality implies for any $\eps>0$,
	\[
	R^{-\frac{(d-1)\alpha}{2}} \|u(t)\|^{\frac{\alpha}{2}}_{\dot{H}^1_c} \lesssim \eps \|u(t)\|^2_{\dot{H}^1_c} + \eps^{-\frac{\alpha}{4-\alpha}} R^{-\frac{2(d-1)\alpha}{4-\alpha}}.
	\]
	Note that in our consideration, we always have $0<\alpha<4$. 		We thus get
	\[
	\frac{d^2}{dt^2} V_{\varphi_R}(t) \leq 4d\alpha E(u(t)) - 2(d\alpha-4) \|u(t)\|^2_{\dot{H}^1_c} + C\eps \|u(t)\|^2_{\dot{H}^1_c} + O\left( R^{-2} + \eps^{-\frac{\alpha}{4-\alpha}} R^{-\frac{2(d-1)\alpha}{4-\alpha}} \right),
	\]
	for any $t\in [0,T)$, any $R>1$, any $\eps>0$ and some constant $C>0$.
	
	To see $(\ref{virial potential bound})$, we follow the argument of Bonheure-Cast\'eras-Gou-Jeanjean \cite{BonheureCasterasGouJeanjean}. Fix $t \in [0,T)$ and denote
	\[
	\mu:= \frac{4d\alpha |E(u_0)| +2}{d\alpha-4}.
	\]
	We consider two cases.
	
	\noindent {\bf Case 1.} 
	\[
	\|u(t)\|^2_{\dot{H}^1_c} \leq \mu.
	\]
	Since $4d\alpha E(u(t)) - 2(d\alpha-4) \|u(t)\|^2_{\dot{H}^1_c} =8Q(u(t)) \leq -8a$ for any $t\in [0,T)$, we have
	\[
	\frac{d^2}{dt^2} V_{\varphi_R}(t) \leq -8a + C\eps \mu + O\left( R^{-2} + \eps^{-\frac{\alpha}{4-\alpha}} R^{-\frac{2(d-1)\alpha}{4-\alpha}} \right).
	\]
	By choosing $\eps>0$ small enough and $R>1$ large enough depending on $\eps$, we see that
	\[
	\frac{d^2}{dt^2} V_{\varphi_R}(t) \leq -4a.
	\]
	
	\noindent {\bf Case 2.}
	\[
	\|u(t)\|^2_{\dot{H}^1_c} >\mu.
	\]
	In this case, we have
	\[
	4d\alpha E(u_0) - 2(d\alpha-4) \|u(t)\|^2_{\dot{H}^1_c} < -2 -(d\alpha -4) \|u(t)\|^2_{\dot{H}^1_c}.
	\]
	Thus,
	\[
	\frac{d^2}{dt^2} V_{\varphi_R}(t) \leq -2 -(d\alpha-4) \|u(t)\|^2_{\dot{H}^1_c} + C\eps \|u(t)\|^2_{\dot{H}^1_c} + O \left( R^{-2} + \eps^{-\frac{\alpha}{4-\alpha}} R^{-\frac{2(d-1)\alpha}{4-\alpha}} \right).
	\]
	Since $d\alpha-4 >0$, we choose $\eps>0$ small enough so that 
	\[
	d\alpha-4 - C\eps \geq 0.
	\]
	This implies that
	\[
	\frac{d^2}{dt^2} V_{\varphi_R}(t) \leq -2 + O\left( R^{-2} + \eps^{-\frac{\alpha}{4-\alpha}} R^{-\frac{2(d-1)\alpha}{4-\alpha}} \right).
	\]
	We next choose $R>1$ large enough depending on $\eps$ so that 
	\[
	\frac{d^2}{dt^2} V_{\varphi_R}(t) \leq -1.
	\]
	Note that in both cases, the choices of $\eps>0$ and $R>1$ are independent of $t$. Therefore, the claim follows with $b= \min\{4a, 1\}>0$. 
	
	\noindent {\bf Step 3.} By Step 2, the solution $u(t)$ satisfies
	\[
	\frac{d^2}{dt^2} V_{\varphi_R}(t) \leq -b <0,
	\]
	for any $t\in [0,T)$. The convexity argument of Glassey (see e.g. \cite{Glassey}) implies that the solution blows up in finite time. The proof is complete.	
	\defendproof

	\section*{Acknowledgments}
	V. D. Dinh would like to express his deep gratitude to his wife-Uyen Cong for her encouragement and support. The authors would like to thank the reviewers for their helpful comments and suggestions.


\begin{thebibliography}{99}
		
		\bibitem{AstrakharchikMalomed} {\bf G. E. Astrakharchik, B. A. Malomed}, {\it Quantum versus mean-field collapse in a many-body system}, Phys. Rev. A 92 (2015), 043632.
		
		\bibitem{Bensouilah} {\bf A. Bensouilah}, {\it $L^2$ concentration of blow-up solutions for the mass-critical NLS with inverse-square potential}, preprint \url{arXiv:1803.05944}, 2018.
		
		\bibitem{BensouilahDinh} {\bf A. Bensouilah, V. D. Dinh}, {\it Mass concentration and characterization of finite time blow-up solutions for the nonlinear Schr\"odinger equation with inverse-square potential}, preprint \url{arXiv:1804.08752}, 2018.
		
		\bibitem{BensouilahDinhZhu} {\bf A. Bensouilah, V. D. Dinh, S. Zhu}, {\it On stability and instability of standing waves for the nonlinear Schr\"odinger equation with inverse-square potential}, to appear in J. Math. Phys. 2018.
		
		\bibitem{BerestyckiCazenave} {\bf H. Berestycki, T. Cazenave}, {\it Instabilit\'e des \'etats stationnaires dans les \'equations de Schr\"odinger et de Klein-Gordon non lin\'eaires}, C. R. Acad. Sci. Paris 293 (1981), 489--492. 
		
		
		\bibitem{BonheureCasterasGouJeanjean} {\bf D. Bonheure, J. B. Cast\'eras, T. Gou, L. Jeanjean}, {\it Strong instability of ground states to a fourth order Schr\"odinger equation}, Int. Math. Res. Not. 2017, No. 00, 1--17.
		
		
		
		
		\bibitem{BurPlaStaZad} {\bf N. Burq, F. Planchon, J. G. Stalker, A. S. Tahvildar-Zadel}, {\it Strichartz estimates for the wave and Schr\"odinger equations with the inverse-square potential}, J. Funct. Anal. 203 (2003), No. 2, 519--549.
		
		\bibitem{Case} {\bf K. M. Case}, {\it Singular potentials}, Physical Rev. 80 (1950), No. 2, 797--806.
		
		\bibitem{CamEpeFanCan} {\bf H. E. Camblong, L. N. Epele, H. Fanchiotti, C. A. Garcia Canal}, {\it Quantum anomaly in molecular physics}, Phys. Rev. Lett. 87 (2001), No. 22, 220302.
		
		\bibitem{CazenaveLions} {\bf T. Cazenave, P. L. Lions}, {\it Orbital stability of standing waves for some nonlinear Schr\"odinger equations}, Comm. Math. Phys. 85 (1982), No. 4, 549--561.
		
		\bibitem{Cazenave} {\bf T. Cazenave}, {\it  Semilinear Schr\"odinger equations}, Courant Lecture Notes in Mathematics 10, Courant Institute of Mathematical Sciences, AMS, 2003.
		
		
		\bibitem{CsoboGenoud} {\bf E. Csobo, F. Genoud}, {\it Minimal mass blow-up solutions for the $L^2$ critical NLS with inverse-square potential}, Nonlinear Anal. 168 (2018), 110--129.
		
		\bibitem{Dinh-inverse} {\bf V. D. Dinh}, {\it Global existence and blowup for a class of the focusing nonlinear Schr\"odinger equation with inverse-square potential}, to appear J. Math. Anal. Appl. 2018.
		
		
		\bibitem{F} {\bf G. Fibich}, {\it The nonlinear Schr\"{o}dinger equation: singular solutions and optical collapse},  Springer, 2015.
		
		
		
		\bibitem{FukayaOhta} {\bf N. Fukaya, M. Ohta}, {\it Strong instability of standing waves for nonlinear Schr\"odinger equations with attractive inverse power potential}, preprint \url{	arXiv:1804.02127}, 2018. 
		
		\bibitem{Glassey} {\bf R. T. Glassey}, {\it On the blowing up of solutions to the Cauchy problem for nonlinear Schr\"odinger equation}, J. Math. Phys. 18 (1977), 1794--1797.
		
		\bibitem{GrillakisShatahStrauss87} {\bf M. Grillakis, J. Shatah, W. A. Strauss}, {\it Stability theory of solitary waves in the presence of symmetry I}, J. Funct. Anal. 74 (1987), 160--197.
		
		\bibitem{GrillakisShatahStrauss90} {\bf M. Grillakis, J. Shatah, W. A. Strauss}, {\it Stability theory of solitary waves in the presence of symmetry II}, J. Funct. Anal. 94 (1990), 308--348.
		
		\bibitem{KalSchWalWus} {\bf H. Kalf, U. W. Schmincke, J. Walter, R. Wust}, {\it On the spectral theory of Schr\"odinger and Dirac operators with strongly singular potentials}, in: Spectral Theory and Differential Equations, 182--226, Lect. Notes in Math. 448, Springer, Berlin, 1975.
		
		
		\bibitem{KilMiaVisZhaZhe-energy} {\bf R. Killip, C. Miao, M. Visan, J. Zhang, J. Zheng}, {\it The energy-critical NLS with inverse-square potential}, Discrete Contin. Dyn. Syst. 37 (2017), No. 7, 3831--3866.
		
		\bibitem{KillipMurphyVisanZheng} {\bf R. Killip, J. Murphy, M. Visan, J. Zheng}, {\it The focusing cubic NLS with inverse-square potential in three space dimensions}, Differential Integral Equations 30 (2017), No. 3-4, 161--206.
		
		
		\bibitem{LeCoz08} {\bf S. Le Coz}, {\it A note on Berestycki-Cazenave's classical instability result for nonlinear Schr\"odinger equations}, Adv. Nonlinear Stud. 8 (2008), 455--463.
		
		
		\bibitem{LiebLoss} {\bf E. H. Lieb, M. Loss}, {\it Analysis}, Second Edition, Graduate Studies in Mathematics 14, AMS, 2001.
		
		
		
		
		\bibitem{OkazawaSuzukiYokota} {\bf N. Okazawa, T. Suzuki, T. Yokota}, {\it Energy methods for abstract nonlinear Schr\"odinger equations}, Evol. Equ. Control Theory 1 (2012), 337--354.
		
		
		
		
		
		
		\bibitem{PengShi} {\bf C. Peng, Q. Shi}, {\it Stability of standing waves for the fractional nonlinear Schr\"odinger equation}, J. Math. Phys. 59 (2018), 011508.
		
		\bibitem{SakaguchiMalomed-11} {\bf H. Sakaguchi, B. A. Malomed}, {\it Suppression of quantum-mechanical collapse by repulsive interactions in a quantum gas}, Phys. Rev. A 83 (2011), 013607.
		
		\bibitem{SakaguchiMalomed-13} {\bf H. Sakaguchi, B. A. Malomed}, {\it Suppression of the quantum collapse in binary bosonic gases}, Phys. Rev. A 88 (2013), 043638.
		
		
		
		
		\bibitem{TracZogra} {\bf G. P. Trachanas, N. B. Zographopoulos}, {\it Orbital stability for the Schr\"odinger operator involving inverse-square potential}, J. Differential Equations 259 (2015), No. 10, 4989--5016.
		
		\bibitem{Weinstein85} {\bf M. I. Weinstein}, {\it Modulational stability of ground states of nonlinear Schr\"odinger equations}, SIAM J. Math. Anal. 16 (1985), 472--491.
		
		\bibitem{Weinstein86} {\bf M. I. Weinstein}, {\it Lyapunov stability of ground states of nonlinear dispersive evolution equations}, Comm. Pure Appl. Math. 39 (1986), 51--67.
		
		
		\bibitem{Zhang} {\bf J. Zhang}, {\it Sharp threshold for blowup and global existence in nonlinear Schr\"odinger equations under a harmonic potential}, Comm. Partial Differential Equations 30 (2007), 1429--1443.
		
		\bibitem{ZhangZheng} {\bf J. Zhang, J. Zheng}, {\it Scattering theory for nonlinear Schr\"odinger equations with inverse-square potential}, J. Funct. Anal. 267 (2014), No. 8, 2907--2932.
		
		\bibitem{ZhangZhu} {\bf J. Zhang, S. Zhu}, {\it Stability of standing waves for the nonlinear fractional Schr\"odinger equation}, J. Dynam. Differential Equations 29 (2017), No. 3, 1017--1030.
		
		
	\end{thebibliography}
\end{document}